\theoremstyle{definition}
\pgfplotsset{compat=1.15}
\def\be{\begin{eqnarray}}
\def\ee{\end{eqnarray}}
\def\ben{\begin{eqnarray*}}
\def\een{\end{eqnarray*}}
\definecolor{wqwqwq}{rgb}{0.3764705882352941,0.3764705882352941,0.3764705882352941}
\definecolor{aqaqaq}{rgb}{0.6274509803921569,0.6274509803921569,0.6274509803921569}
\definecolor{yqyqyq}{rgb}{0.5019607843137255,0.5019607843137255,0.5019607843137255}
\definecolor{cqcqcq}{rgb}{0.7529411764705882,0.7529411764705882,0.7529411764705882}
\definecolor{uuuuuu}{rgb}{0.26666666666666666,0.26666666666666666,0.26666666666666666}
\newcommand{\bC}{\mathbb{C}}
\newcommand{\bP}{\mathbb{P}}
\newcommand{\bQ}{\mathbb{Q}}
\newcommand{\bR}{\mathbb{R}}
\newcommand{\bZ}{\mathbb{Z}}
\newcommand{\bp}{\mathbf{p}}
\newcommand{\cA}{\mathcal{A}}
\newcommand{\cD}{\mathcal{D}}
\newcommand{\cK}{\mathcal{K}}
\newcommand{\cL}{\mathcal{L}}
\newcommand{\cN}{\mathcal{N}}
\newcommand{\cO}{\mathcal{O}}
\newcommand{\fX}{\mathfrak{X}}
\newcommand{\sA}{\mathsf{A}}
\newcommand{\sK}{\mathsf{K}}
\newcommand{\sT}{\mathsf{T}}
\newcommand{\loc}{\mathrm{loc}}
\newcommand{\vir}{\mathrm{vir}}
\newcommand{\Attr}{\operatorname{Attr}}
\newcommand{\Bl}{\operatorname{Bl}}
\newcommand{\Eff}{\operatorname{Eff}}
\newcommand{\ev}{\operatorname{ev}}
\newcommand{\Hom}{\operatorname{Hom}}
\newcommand{\Lie}{\operatorname{Lie}}
\newcommand{\pt}{\operatorname{pt}}
\newcommand{\Spec}{\operatorname{Spec}}
\newcommand{\tr}{\operatorname{tr}}
\newcommand{\eff}{\operatorname{eff}}
\theoremstyle{definition}
\newtheorem{Definition}{Definition}[section]
\newtheorem{Remark}[Definition]{Remark}
\newtheorem{Example}[Definition]{Example}
\numberwithin{equation}{section}
\theoremstyle{Theorem}
\newtheorem{Theorem}[Definition]{Theorem}
\newtheorem{Lemma}[Definition]{Lemma}
\newtheorem{Corollary}[Definition]{Corollary}
\begin{document}
	
	\title{Quantum $K$-theory of toric varieties, level structures, and 3d mirror symmetry}
	\author{Yongbin Ruan, Yaoxiong Wen,  Zijun Zhou}
	\date{}
	\maketitle
	\thispagestyle{empty}

	\setlength{\parskip}{1ex}

\begin{abstract}
	We introduce a new version of 3d mirror symmetry for toric stacks, inspired by a 3d $\cN = 2$ abelian mirror symmetry construction in physics. Given some toric data, we introduce the $K$-theoretic $I$-function with effective level structure for the associated toric stack. When a particular stability condition is chosen, it restricts to the $I$-function for the particular toric GIT quotient. The mirror of a toric stack is defined by the Gale dual of the original toric data. We then proved the mirror conjecture that the $I$-functions of a mirror pair coincide, under the mirror map, which switches K\"ahler and equivariant parameters, and maps $q\mapsto q^{-1}$. 
\end{abstract}

	\tableofcontents

\vspace{1cm}

\section{Introduction}

One of the recent remarkable discoveries is the connection between quantum $K$-theory and 3d TQFT.  For a long time, quantum $K$-theory has been viewed as a variant to quantum cohomology, which comes from a 2d TQFT. The new connection puts quantum $K$-theory in a different path
to the new territory of 3d physics, the mathematics of which was much less understood. The 3d physics has its own mirror symmetry phenomenon, which falls into two versions, for $\cN=4$ theories versus $\cN=2$ theories. 3d $\cN=4$ theories are much better behaved in physics, but apply to more restrictive targets such as Nakajima quiver varieties, due to the presence of more supersymmetries.  There are many mathematical results on the enumerative-geometric aspect of 3d $\cN=4$ mirror symmetry by Okounkov's group \cite{AOelliptic, KZ, RSVZ, RSVZ2, SZ}.

Our main interest is in 3d $\cN=2$ theories which apply to a general K\"ahler manifold. Its mirror symmetry was poorly understood even in physics. In addition, for such theories there is a new feature called \emph{level structure} introduced by Ruan--Zhang \cite{RZ}.  Any duality for $\cN=2$ theories should incorporate the level structure, which makes it quite difficult. Right now, we are still in the early stage of exploration. There are many conjectural examples for 3d $\cN=4$ mirror pairs in physics. However, as far as our knowledge of physics literature \cite{DT, AHKT, ARW}, the only major class of 3d $\cN=2$ mirror pairs are toric varieties as follows.

Consider the following short exact sequence
\begin{align} \label{exact-intro}
\xymatrix{
	0 \ar[r] & \mathbb{Z}^k \ar[r]^-\iota &  \mathbb{Z}^n \ar[r]^-\beta &  \mathbb{Z}^{n-k} \ar[r] & 0. 
} 	
\end{align}

{\bf 3d Toric Mirror Conjecture: }
\emph{ 
\begin{itemize}
	\setlength{\parskip}{1ex}
\item {\bf Model A:} Let $\iota=(\iota_{ij})$ for $j=1, \cdots, k$ and $i=1, \cdots, n$. One side of the mirror symmetry is the toric quotient $[{\bf C}^n/({\bf C^*})^k]$ defined by the charge matrix $\iota$, with the so-called {effective level } (see Definition \ref{effective-level}) $\frac{1}{2} \sum_{j,l=1}^k  \iota_{ij} \iota_{il}$. We consider its equivariant theory with quantum parameters $\zeta^{b}$ for $b=1, \cdots, k$ and the equivariant parameters $m_{i}$ for $i=1, \cdots, n$.
\item {\bf Model B:} Let $\beta=(\beta_{ji})$ for $j=1, \cdots, n-k$ and $i = 1, \cdots, n$. The other side of mirror is the toric quotient $[{\bf C}^n/({\bf C^*})^{n-k}]$ defined by the charge matrix $\beta^T$, with the effective level $\frac{1}{2}\sum_{j,l=1}^{n-k} \beta_{ji} \beta_{li}$. We consider its equivariant theory with quantum parameters $\widehat{\zeta}^{p}$ for $p=1, \cdots, n-k$ and equivariant parameters  $\widehat{m}_{i}$ for $i=1, \cdots, n$.
\end{itemize}
Model A is ``equivalent" to Model B via the mirror map
\begin{align}
&\zeta^{b}-\frac{1}{2} \sum_{i=1}^{n} \iota_{ib} m_{i}=\sum_{i=1}^{n} \iota_{ib} \widehat{m}_{i} \label{mirror-map-1}  \\
&-\sum_{i=1}^{n} \beta_{pi} m_{i}=\widehat{\zeta}^{p}+\frac{1}{2} \sum_{i=1}^{N} \beta_{pi} \widehat{m}_{i} . 	\label{mirror-map-2}
\end{align} 
Note that the above statement is slightly different from that in \cite{AHKT}.}

One can study the above ``equivalence" in different contexts. The main goal of this article is to prove the conjecture for equivariant quantum $K$-theoretic $I$-functions. For a person with background in 2d mirror symmetry, the above conjecture is rather strange, since in 2d mirror symmetry, a toric variety is mirrored to a Landau--Ginzburg model, instead of  another toric variety. Furthermore, there are many chambers of toric quotients for given toric data, which in general admit different cohomology/$K$-groups. A logical first step, which is the first approach we tried, seems to be an attempt to match different chambers of mirror pairs. Unfortunately, for the simplest example of projective spaces, this turns out to be false! Our main conceptual breakthrough is the realization that we should consider the $I$-function for the \emph{entire toric stack} (see Definition \ref{total-stack-I-intro} and \ref{total-stack-I}), rather than any of its particular GIT quotients, in the sense to sum up the contributions from ALL chambers. Of course, a naive sum would result in double counting. At this point, we do not know how to do this in general. 

For the equivariant theory of toric variety, its $I$-function (defined by quasimap graph moduli space) localizes to the fixed point contributions. It is easy to see from examples that a fixed point could appear in multiple chambers. 

Let $\sK := (\bC^*)^k$, and $\sT := (\bC^*)^n$. Let $\fX$ be the quotient stack $ [ \bC^n / \sK  ]$. Choose a character $\theta \in \Lie_{\bR}(\sK^\vee) $. We obtain a GIT quotient $ X_{\theta} $, and some of the $\sT$-fixed points $\bp \in \fX $ descend to the fixed points $\bp \in X_{\theta}$. Combinatorially, the $\sT$-fixed points of $\fX$ can be identified with subsets $\bp \subset \{1, \cdots, n\}$ of size $k$, where the $k\times k$ submatrix of $\iota$ corresponding to $\bp$ is of full rank. 

Let $I_{\bp, \theta}$ be the fixed point contribution of $\bp$ to the $I$-function of $X_{\theta}$.  A key technical lemma is

\begin{Lemma}[Corollary \ref{independence-I-bp}]
$I_{\bp, \theta}$ is independent of the chamber and we denote it by $I({\bp})$.
\end{Lemma}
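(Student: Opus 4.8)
The plan is to derive the statement by inspection from the explicit quasimap-localization formula for $I_{X_\theta}$ established earlier, isolating the only two places the stability parameter $\theta$ could enter and checking that neither actually does. Recall that $I_{X_\theta}$ is computed on the quasimap graph space by $\sT\times\bC^{*}$-localization (with $\bC^{*}$ acting on the domain $\bP^{1}$), and that the contribution of a $\sT$-fixed point $\bp$ that descends to $X_\theta$ has the form
\[
I_{\bp,\theta}\;=\;\sum_{d}\,\zeta^{d}\,\ell_{\bp}(d)\prod_{i=1}^{n}\frac{\prod_{r\le 0}\bigl(1-q^{r}\,(L_i|_{\bp})^{-1}\bigr)}{\prod_{r\le\langle D_i,d\rangle}\bigl(1-q^{r}\,(L_i|_{\bp})^{-1}\bigr)},
\]
where $L_i=\cO(D_i)$, the factor $\ell_{\bp}(d)$ encodes the effective level structure, and $d$ runs over a set of curve classes in $L^{\vee}\cong\bZ^{k}$. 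I want to show that both the summand and the index set are the same in every chamber in which $\bp$ survives.

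For the summand: whenever $\bp$ descends, an affine chart of $X_\theta$ around $\bp$ is the quotient $[\bC^{n-k}/\Gamma_{\bp}]$ with coordinates $x_i$, $i\notin\bp$, where $\Gamma_{\bp}$ and all the weights are determined by the invertible submatrix $\iota_{\bp}$ alone, so this local picture is chamber-independent. Hence $L_i|_{\bp}$ is the monomial in the equivariant parameters $m_1,\dots,m_n$ prescribed by $\iota$, $\bp$ and $i$; $\langle D_i,d\rangle=(\iota d)_i$; any overall normalization at $\bp$ (normal bundle, $\psi$-class at the marked point) is likewise intrinsic; $\zeta^{d}$ is a function on $L^{\vee}=\bZ^{k}$, which does not move with $\theta$; and the effective level structure is by construction built from $\iota$ alone, so $\ell_{\bp}(d)$ depends only on $\iota$ and on $\bp$ through a monomial in the $m_i$. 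Thus the summand is literally the same expression for all $\theta$.

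For the index set --- which I expect to be the crux --- the claim is that it equals the simplicial cone $C_{\bp}:=\iota_{\bp}^{-1}\bigl(\bR_{\ge 0}^{\bp}\bigr)=\{\,d:\langle D_i,d\rangle\ge 0\ \text{for all}\ i\in\bp\,\}$, where $\bR_{\ge 0}^{\bp}$ is the nonnegative orthant in $\bR^{\bp}\cong\bR^{k}$; this cone is again visibly independent of $\theta$. A $\sT\times\bC^{*}$-fixed point of the graph space contributing to the $\bp$-piece of $I$ consists of a constant body at $\bp$ together with a base point of degree $d$ at $\infty$; realizing such a configuration forces the $i$-th associated line bundle on $\bP^{1}$, of degree $(\iota d)_i$, to carry a $\bC^{*}$-equivariant section that is nonvanishing off $\infty$ for $i\in\bp$ and the zero section for $i\notin\bp$, which is possible exactly when $(\iota d)_i\ge 0$ for all $i\in\bp$, i.e.\ $d\in C_{\bp}$. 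On the other hand the $\sT$-invariant curves through $\bp$ have classes spanning $C_{\bp}$ and are honest effective curves of $X_\theta$ as soon as $\bp$ descends, so $C_{\bp}\subseteq\Eff(X_\theta)$; restricting the a priori index set $\Eff(X_\theta)$ to $C_{\bp}$ therefore discards only absent/vanishing terms. Combining the two points, $I_{\bp,\theta}=\sum_{d\in C_{\bp}\cap\bZ^{k}}(\text{summand})$ with every ingredient chamber-free, hence $I_{\bp,\theta}=I_{\bp,\theta'}=:I(\bp)$. The main obstacle is making this last point precise: describing the $\sT\times\bC^{*}$-fixed loci of the quasimap graph space carefully, checking that the nonemptiness criterion depends only on $\iota_{\bp}$, and tracking the level-structure weights through that identification; this is routine but needs care. (Alternatively, one could invoke the $q$-difference equation satisfied by $I_{X_\theta}$ together with uniqueness of the solution with the prescribed leading term at $\bp$, but reading the localization formula directly is shorter.)
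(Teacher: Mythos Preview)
Your proposal is correct and follows essentially the same logic as the paper. The paper frames the argument slightly more conceptually: rather than inspecting the explicit localization formula term by term, it observes that the obstruction theory $T_\vir(\bp)$ is the restriction of $R\pi_* f^* T\fX$ coming from the ambient stack $\fX$, so every ingredient in the localization contribution is defined directly from the embedding $\bp \hookrightarrow \fX$ with no reference to $X_\theta$; the index set is identified, exactly as you do, with the cone $\Eff(\bp)=\{d:(\iota d)_i\ge 0,\ i\in\bp\}$ via the observation that a quasimap to $\bp$ must carry a generically nonzero section of $U_i$ for $i\in\bp$ (this is the paper's Lemma preceding the corollary). Your version unpacks this into the explicit weights $L_i|_\bp$, pairings $\langle D_i,d\rangle$, and level factors, which amounts to the same content read off from the formula rather than from the stack.
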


The above lemma leads to the following definition. 

\begin{Definition} \label{total-stack-I-intro}
We introduce the $I$-function for the toric stack $\fX$ with the effective level as follows
$$
I^{\eff} (\fX) :=  \sum_{\bp \in \fX^{\sT} } I^{\eff}({\bp}).	
$$
Let $\bp \in \fX $ be a fixed point, the \emph{modified $I$-function with the effective level}  is defined as 
$$
\widetilde I^{\eff} (\bp) := e^{-\sum_{i\not\in \bp} \frac{\ln z_i \ln U_i |_\bp }{\ln q} } \cdot  \prod_{i\not\in \bp} \frac{1-U_i^{-1}|_{\bp}}{ (qU_i |_\bp )_\infty } \cdot  I^{\eff}  (\bp). 
$$
Similarly, we define 
$$
\widetilde{I}^{\eff}(\fX) := \sum_{\bp \in \fX^{\sT}} \widetilde{I}^{\eff}(\bp). 
$$
\end{Definition}

The exponential prefactor $e^{-\sum_{i\not\in \bp} \frac{\ln z_i \ln U_i |_\bp }{\ln q} }$ here is a $q$-analogue of the exponential factor $e^{\sum_i \frac{t_i p_i}{z}}$ as in the cohomological $I$-functions. The factors $\dfrac{1}{(q U_i |_\bp )_\infty}$ are $q$-analogues of gamma functions. The $Q$-coefficients of $\widetilde I^{\eff} (\bp)$ no longer lie in $K_{\sT \times \bC_q^*} (\bp)_\loc$. 

We consider the dual exact sequence of (\ref{exact-intro}) as the \emph{mirror}:
\begin{equation} \label{dual-exact-intro}
\xymatrix{
	0 \ar[r] & (\bZ^d)^\vee \ar[r]^-{\iota^!} & (\bZ^n)^\vee \ar[r]^-{\beta^!} & (\bZ^k)^\vee \ar[r] & 0,
} 
\end{equation}
where 
$$
\iota^! := \beta^T, \qquad \beta^! := \iota^T. 
$$
This is often refered to as the Gale dual. 

\begin{Definition}[Definition \ref{defn-mirror}] 
	
Let $\sA := \sT / \sK \cong (\bC^*)^d$ be the quotient torus. 
	
	\begin{itemize}
		
		\setlength{\parskip}{1ex}
		
		\item The \emph{mirror toric stack} to $\fX$ is defined as
		$$
		\fX^! := [\bC^n / \sA^\vee], 
		$$
		the toric quotient stack associated with the short exact sequence (\ref{dual-exact-intro}), where the action by $\sA^\vee$ is defined by $\iota^!$. 
		
		\item There is a natural bijection between fixed points of a mirror pair. Given a fixed point $\bp$ of $\fX$, the \emph{mirror fixed point} is defined as the complement
		$$
		\bp^! = \{1, \cdots, n \} \backslash \bp \quad \in \quad (\fX^!)^\sT . 
		$$
		
	\end{itemize}
	
\end{Definition}

The main result of this article is a proof of above mirror conjecture for modified $I$-functions with effective level structure for toric stacks.

\begin{Theorem} [Theorem \ref{main-theorem}]
	Let $(\fX, \fX^!)$ be a mirror pair of toric stacks. Let $q^{z_i \partial_{z_i}}$ denote the $q$-difference operator that shifts $z_i \mapsto q z_i$, and similar with $q^{a_i \partial_{a_i}}$. 
	
	\begin{enumerate} [1)]
		\setlength{\parskip}{1ex}
		
		\item The modified $I$-function of $\fX$ with effective level structure satisfies the following two sets of $q$-difference equations, with respect to the K\"ahler and equivariant parameters. 
		
		\begin{itemize} 
			\setlength{\parskip}{1ex}
			
			\item Let $\{ e_i \}_{i=1}^n$ be the standard basis of $\bZ^n$, and consider any $\sum_{i=1}^n \mu_i e_i \in  \ker \beta $ such that $\mu_i = \pm 1  $ or $0$.  Denote by $S_\pm$ the subset of indices with $\mu_i= \pm1$. Then  
		$$
			\left[ \prod_{i\in S_+} ( z_i^{-1} (1- q^{- z_i \partial_{z_i}} ) ) - \prod_{i\in S_-} (z_i^{-1} (1- q^{- z_i \partial_{z_i}}  ) ) \right] \widetilde I^{\eff} (\fX) = 0. 
			$$
			
			\item Let $\{ e^!_i \}_{i=1}^n$ be the standard basis of $\bZ^n$ in the dual exact sequence, and consider any $\sum_{i=1}^n \mu^!_i e^!_i \in  \ker \iota^T $ such that $\mu^!_i = \pm 1  $ or $0$. Denote by $R_\pm$ the subset of indices with $\mu^!_i= \pm 1$. Then 
		$$
			\left[ \prod_{i\in R_+} (a_i^{-1} (1- q^{ a_i \partial_{a_i}} ) ) -  \prod_{i\in R_-} ( a_i^{-1} (1- q^{ a_i \partial_{a_i}} ) ) \right] \left( e^{\sum_{i=1}^n \frac{\ln z_i \ln a_i}{\ln q} } \cdot \widetilde I^{\eff} (\fX) \right) = 0 . 
		$$
			
		\end{itemize}
		Moreover, the solution to the above difference equations is unique, with certain prescribed asymptotic initial condition (see Lemma \ref{asymptotic}). 
		
		\item Under the mirror map 
		$$
		\tau (z_i^!) = a_i, \qquad \tau (a_i^!) = z_i, \qquad \tau (q) = q^{-1},  
		$$
		the two sets of $q$-difference equations (\ref{eqn-for-kalher}) (\ref{eqn-for-equiv}), for modified $I$-functions of the mirror pair $(\fX, \fX^!)$ with effective level structure, coincide with each other. Therefore, combining it with the uniqueness result, we have
		$$
		\widetilde I^{\eff}(\fX) = e^{\sum_{i=1}^n \frac{\ln z_i \ln a_i}{\ln q} } \cdot \tau ( \widetilde I^{\eff} (\fX^!) )	. 
		$$
		
	\end{enumerate}

\end{Theorem}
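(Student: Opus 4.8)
The plan is to establish both parts separately and then combine them via uniqueness. For part (1), I would start from the explicit fixed-point contributions $I^{\eff}(\bp)$, which after localization are products over $i \notin \bp$ of ratios of $q$-Pochhammer symbols in the variables $U_i|_\bp$, together with the $Q$-expansion coming from the quasimap graph space. The key observation is that each factor $\prod_{i\in S_+}(z_i^{-1}(1-q^{-z_i\partial_{z_i}}))$ acts, via the shift $z_i \mapsto qz_i$, as a finite-difference operator on the Pochhammer factors; a standard computation shows $(1-q^{-z_i\partial_{z_i}})(qU_i|_\bp)_\infty^{-1}$ produces a shift of the summation index in $Q$-degree. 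The relation $\sum_i \mu_i e_i \in \ker\beta$ is exactly what guarantees that the two monomials $\prod_{i\in S_+}(\cdots)$ and $\prod_{i\in S_-}(\cdots)$ act on $\widetilde I^{\eff}(\fX)$ by the \emph{same} index shift, hence cancel. The exponential prefactor $e^{-\sum_{i\notin\bp}\frac{\ln z_i \ln U_i|_\bp}{\ln q}}$ in the \emph{modified} $I$-function is precisely the gadget needed to convert the operators $z_i^{-1}(1-q^{-z_i\partial_{z_i}})$ acting on $I^{\eff}$ into clean shift operators on $\widetilde I^{\eff}$, absorbing the monomial factors $U_i|_\bp$ that would otherwise obstruct the cancellation; I would verify this conjugation identity first as a lemma. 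The second family of equations (in the $a_i$, i.e.\ the equivariant parameters) is proved in the same way after the further conjugation by $e^{\sum_i \frac{\ln z_i \ln a_i}{\ln q}}$, which swaps the roles of K\"ahler and equivariant variables at the level of difference operators; here the relevant kernel is $\ker\iota^T = \ker\beta^!$, coming from the dual sequence.

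Next I would prove the uniqueness statement: the two sets of difference equations, taken together, determine $\widetilde I^{\eff}(\fX)$ from a single asymptotic coefficient. The point is that $\ker\beta \cong \bZ^k$ and $\ker\iota^T \cong \bZ^{n-k}$ together span a finite-index sublattice of $\bZ^n$, so the union of the two families of $\pm1$-vectors generates enough difference relations to propagate the coefficient of any $Q$-monomial and any power of the $a_i$ from the initial term; the asymptotic condition of Lemma \ref{asymptotic} pins down that initial term. This is essentially a Gr\"obner/triangularity argument on the lattice of exponents, and I would organize it as: (i) the leading asymptotics is one-dimensional; (ii) each difference equation expresses a coefficient in terms of strictly lower ones in a suitable partial order; (iii) the two lattices together make that partial order well-founded.

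Finally, for part (2), the mirror map $\tau$ is defined by $\tau(z_i^!) = a_i$, $\tau(a_i^!) = z_i$, $\tau(q) = q^{-1}$. Under $\tau$, the Gale duality $\iota^! = \beta^T$, $\beta^! = \iota^T$ exchanges $\ker\beta$ with $\ker(\iota^!)^T = \ker\beta = $ (the kernel controlling the K\"ahler equations of $\fX^!$), and symmetrically for the equivariant equations; so the first family of equations for $\fX$ becomes the second family for $\fX^!$ and vice versa. The substitution $q \mapsto q^{-1}$ is exactly what turns $q^{-z_i\partial_{z_i}}$ into $q^{a_i^!\partial_{a_i^!}}$ with the correct sign, matching the two operator families. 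Thus $e^{\sum_i \frac{\ln z_i\ln a_i}{\ln q}}\cdot \tau(\widetilde I^{\eff}(\fX^!))$ satisfies the same difference system as $\widetilde I^{\eff}(\fX)$; checking that it also satisfies the same asymptotic initial condition (a direct computation with the leading terms, using that $\bp^! = \{1,\dots,n\}\setminus\bp$) and invoking uniqueness gives the claimed identity.

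The main obstacle I anticipate is the bookkeeping in the conjugation lemma of part (1): showing that, after dressing $I^{\eff}(\bp)$ by the exponential and $q$-gamma prefactors, the operators $z_i^{-1}(1-q^{-z_i\partial_{z_i}})$ really do act as pure shifts with no leftover $\bp$-dependent monomials, uniformly in $\bp$, and that the shifts induced by $S_+$ and $S_-$ coincide on the nose when $\sum_i\mu_i e_i \in \ker\beta$. This requires tracking how $U_i|_\bp$ depends on the $Q$-variables and on $\bp$ through the charge matrix $\iota$, and matching it against the $\ker\beta$ relation; getting the signs and the $\frac{1}{\ln q}$ normalizations right in the exponential prefactor is where the argument is most delicate. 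The uniqueness argument's well-foundedness claim (step (iii) above) is the second place where care is needed, since $\ker\beta$ and $\ker\iota^T$ only span a \emph{finite-index} sublattice, so one must argue that finitely many extra ``diagonal'' relations suffice, or else restrict attention to the sublattice and handle the quotient separately.
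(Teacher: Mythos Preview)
Your overall architecture---derive two families of $q$-difference equations, prove a uniqueness statement with a prescribed asymptotic, then match across the mirror map---is the paper's. But two of your proposed mechanisms differ from what the paper actually does, and in one place your route is harder than necessary.

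For part (1), you describe the K\"ahler equation as arising because the $S_+$ and $S_-$ products induce ``the same index shift''. That is not quite the mechanism. The paper's key step is to prove, for each $i$ and each fixed point $\bp$, the single relation
\[
\bigl(q^{-z_i\partial_{z_i}} + z_i\, q^{a_i\partial_{a_i}} - 1\bigr)\,\widetilde I^{\eff}(\bp) = 0,
\]
by direct computation on the explicit series (this is where the exponential and $q$-Gamma prefactors do their work, and where the $a_j$-shift for $j\in\bp$ requires a nontrivial reindexing of the sum). This says that on $\widetilde I^{\eff}(\bp)$ the operator $z_i^{-1}(1-q^{-z_i\partial_{z_i}})$ coincides with $q^{a_i\partial_{a_i}}$. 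The K\"ahler equation then reduces to the identity $\prod_{i\in S_+} q^{a_i\partial_{a_i}} = \prod_{i\in S_-} q^{a_i\partial_{a_i}}$ on $\widetilde I^{\eff}(\bp)$, which holds because $\widetilde I^{\eff}(\bp)$ depends on the $a_i$ only through the \emph{effective} equivariant parameters $\Lambda_b = \prod_j a_j^{\beta_{bj}}$, and a shift along $\mu\in\ker\beta$ fixes those. The equivariant equation is obtained from the same linear relation after conjugating by $e^{\sum_i \frac{\ln z_i\ln a_i}{\ln q}}$ and invoking dependence only on effective K\"ahler parameters. So the cancellation is not ``same index shift in $d$'' but ``trivial shift in the effective variables''; your anticipated difficulty with leftover $\bp$-dependent monomials is exactly what the linear relation above dissolves.

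For uniqueness, you propose to use both families together and flag the finite-index sublattice of $\bZ^n$ as an obstacle. The paper avoids this entirely: it proves uniqueness \emph{fixed point by fixed point}, using \emph{only} the K\"ahler equations, as a formal power series in the $k$ effective K\"ahler parameters $Q_1,\dots,Q_k$ with coefficients in $K_{\sT\times\bC_q^*}(\bp)_\loc$. After putting $\iota$ in the form $\bigl(\begin{smallmatrix} I \\ C \end{smallmatrix}\bigr)$, each column of $\iota$ gives a difference equation of the shape $[F_j(q^{Q_\bullet\partial_{Q_\bullet}}) - Q_j\,G_j(q^{Q_\bullet\partial_{Q_\bullet}})]\,I^{\eff}(\bp)=0$ with $F_j(q^{n_1},\dots,q^{n_k})\neq 0$ for $n_\bullet\geq 0$, $n_j>0$; a one-line recursion then determines all coefficients from the constant term. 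Your finite-index obstruction never arises. For part (2), the asymptotic check that $e^{\sum_i \frac{\ln z_i\ln a_i}{\ln q}}\cdot\tau(\widetilde I^{\eff}(\bp^!))$ has the correct $Q\to 0$ leading term is carried out explicitly via the $q$-binomial identity $\sum_{m\geq 0} x^m/(q)_m = 1/(x)_\infty$, applied to the $U_i|_{\bp^!}\to 0$ limit of the mirror series.
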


\begin{Remark}
Our mirror map $\tau$ appears different from the mirror map (\ref{mirror-map-1}) and (\ref{mirror-map-2}) in physics literature. In Section \ref{parameters}, we introduced two versions of K\"ahler parameters, the \emph{effective} and the \emph{redundant}. Our mirror map $\tau$ is written in terms of the redundant parameters. 

To see how they are related, first rewrite (\ref{mirror-map-1}) (\ref{mirror-map-2}) as
\begin{align} \label{rewrite-mirror-map}
\zeta^{b} =\sum_{i=1}^{n} \iota_{ib} \left( \widehat{m}_{i} +\frac{1}{2}  m_{i} \right), \qquad  - \sum_{i=1}^{n} \beta_{pi} \left( \frac{1}{2}\widehat{m}_{i}+ m_{i} \right) =\widehat{\zeta}^{p}	. 
\end{align} 
$\zeta^b$ can then be interpreted as our effective K\"ahler parameter $Q_b$ (see Section \ref{parameters}). From Remark \ref{effective-Kahler} we know the relation between effective and redundant K\"ahler parameters are as follows
\begin{align*}
\ln Q_b = \sum_{i=1}^n \iota_{ib} \ln z_i, \qquad	 \ln Q^!_p = \sum_{i=1}^n \beta_{pi} \ln z^!_i . 
\end{align*}
Applying mirror map (\ref{mirror-map}), we obtain
\begin{align*}
\tau(\ln Q_b) = \sum_{i=1}^n \iota_{ib} \ln a^!_i, \quad \tau(\ln Q^!_p) = \sum_{i=1}^n \beta_{pi} \ln a_i	, 
\end{align*}
which is the same as (\ref{rewrite-mirror-map}) if we let 
\begin{align*}
\widehat{m}_{i} +\frac{1}{2}  m_{i}=  \ln a^!_i, \quad 	\frac{1}{2}\widehat{m}_{i}+ m_{i} = \ln a_i^{-1} . 
\end{align*}	 
\end{Remark}

The paper is organized as follows. In Section \ref{sec-2}, we briefly outline the duality of combinatorial structures for mirror pairs. This section is an extension of previous results of the last author \cite{SZ}. The main technical results are in Section \ref{sec-3} where we prove the main technical lemma and compute the modified $I$-function with effective level structure. The proof of the main theorem is in the Section \ref{sec-4}. To give the reader a sense of the problem, we also show a hands-on approach to the case of projective space, using the $q$-binomial formula. The general case follows from the duality of $q$-difference equations and the analysis of their solution spaces.

\subsection{Acknowledgement}
The bulk of this work was done during our stay at the Institute for Advanced Study in Mathematics at Zhejiang University. We express our special thanks to the institute for the wonderful environment and support. The second author would like to thank Prof. Bohan Fang, Prof. Huijun Fan and Peking University for the helpful support during the visit. Thanks are also due to Prof. Shuai Guo and Ming Zhang for helpful discussions.

\vspace{1cm}

\section{Toric stacks and GIT quotients from fixed points} \label{sec-2}

Toric varieties \cite{Ful} have been studied for decades and provide important examples in algebraic geometry that can be explicitly described in terms of combinatorial data. To define a toric variety, one starts with a fan $\Sigma$ in a lattice $N \cong \bZ^r$, whose rays are denoted by $\rho_1, \cdots, \rho_n$. The toric variety is then constructed as a quotient $(\bC^n - Z ) / \sK$, where $Z$ is the irrelavant locus determined by the fan, and $\sK \cong (\bC^*)^k$ (we denote $k: = n-r$) is a torus acting on $\bC^n$, whose action is determind by the relations among the rays $\rho_i$'s. 

An alternative way to construct toric varieties is to consider them as (real) symplectic reductions, or equivalently, GIT quotients. By choosing an appropriate stability condition $\theta$, the irrelevant subvariety $Z$ turns out to be the unstable locus of the action, and the quotient stated as above is the GIT quotient $\bC^n /\!/_\theta \sK$. However, there are more possible choices of $\theta$ that might be interesting. The variation of GIT \cite{DH, Tha} implies that when $\theta$ crosses a wall and enters a different chamber, one obtains a different GIT quotient, which might not be the toric variety defined by the original fan $\Sigma$. Even if one happens to obtain the same variety, it is not canonically isomorphic to the original one, but related to it by a birational transformation. Therefore, if we are interested in a global understanding of \emph{all possible} GIT quotients, it is better to study the quotient stack $[\bC^n / \sK]$ directly. This is the viewpoint we would take in the following of this paper. 

Let $\sT := (\bC^*)^n$ be the standard $n$-dimensional torus acting on $\bC^n$. The action \footnote{which is, by construction, faithful. } by $\sK$ is then characterized by an injective homomorphism $\sK \to \sT$, or equivalently, an injective homomorphism of free $\bZ$-modules $\iota: \bZ^k \to \bZ^n$. The map $\iota$ will be our starting datum. A more convenient and symmetric way is to consider the short exact sequence
\begin{equation} \label{knd}
\xymatrix{
	0 \ar[r] & \bZ^k \ar[r]^-\iota & \bZ^n \ar[r]^-\beta & \bZ^{r} \ar[r] & 0. 
} 
\end{equation}

\begin{Definition}
A matrix $\iota$ is called \emph{totally unimodular} if the determinants of all its maximal square submatrices are either $\pm 1$ or $0$. 
\end{Definition}

\begin{Remark}
Let $\iota$ be the $n\times k$ matrix as above, which is of rank $k$. The definition of totally unimodularity is equivalent to the following: there exists $P\in GL(k, \bZ)$, such that the determinants of all square submatrices (of any size) of $\iota P$ are either $\pm 1$ or $0$. In particular, all entries of $\iota P$ are $\pm 1$ or $0$. 
\end{Remark}

In the rest of this paper, we will always assume that $\iota$ is totally unimodular. It then follows that $\beta$ is also totally unimodular. 

Consider the quotient stack 
$$
\fX := [\bC^n / \sK],
$$
where the action of $\sK$ on $\bC^n$ is defined by $\iota$. The action of $\sT$ on $\bC^n$ descends to $\fX$, and to any of its GIT quotients $X$. The existence of this torus action enables us to study the geometry from the aspect of $\sT$-equivariant theory. Moreover, this torus is large enough, in the sense that $X$ is a GKM manifold \cite{GKM}, whose $\sT$-equivariant geometry can be completely recovered from the information of its $\sT$-fixed points and the 1-dimensional $\sT$-orbits connecting them. For the quotient stack $\fX$, its $\sT$-fixed points are characterized as follows. 

\begin{Lemma}
A $\sT$-\emph{fixed point} of $\fX$ is given by a subset $\bp \subset \{1, \cdots, n\}$, such that the $i$-th rows of the matrix $\iota$ with $i\in \bp$ are linearly independent. 
\end{Lemma}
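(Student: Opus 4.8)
The plan is to stratify $\bC^n$ by its $\sT$-orbits and thereby reduce the statement to an elementary computation of fixed points of a quotient torus acting on a torus by translations. First I would note that, since $\sK \subseteq \sT$, the $\sT$-action on $\fX = [\bC^n/\sK]$ factors through $\sA = \sT/\sK$, so that a $\sT$-fixed point of $\fX$ is the same as an $\sA$-fixed object of the groupoid $\fX(\Spec\bC)$; concretely such an object is a $\sK$-orbit $\sK\cdot x$, $x\in\bC^n$, with automorphism group $\Stab_\sK(x)$, and it is $\sT$-fixed precisely when $\sT\cdot x = \sK\cdot x$ (with the stabilizers matching up).

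Next, for $\bp\subseteq\{1,\dots,n\}$ I would set $O_\bp = \{x\in\bC^n : x_i\neq 0 \iff i\in\bp\}$; these are the $\sT$-orbits of $\bC^n$, each $\sT$- and hence $\sK$-invariant, giving a stratification $\fX = \bigsqcup_\bp [O_\bp/\sK]$ into $\sT$-invariant locally closed substacks, so that every $\sT$-fixed point lies in a unique stratum and it suffices to analyze each $[O_\bp/\sK]$. The key observation is that $O_\bp\cong(\bC^*)^\bp$ as a $\sT$-variety, with $\sT$ acting through the coordinate projection $\sT\twoheadrightarrow(\bC^*)^\bp$ and $\sK$ acting through the homomorphism $\sK\to(\bC^*)^\bp$ whose cocharacter map is $\iota_\bp:\bZ^k\to\bZ^\bp$, the submatrix of $\iota$ on the rows indexed by $\bp$; hence $[O_\bp/\sK]\cong[(\bC^*)^\bp/\sK]$, whose coarse space is the torus $(\bC^*)^\bp/\im(\iota_\bp)$ of dimension $|\bp|-\rk\iota_\bp$, on which $\sA$ acts transitively by translations via the surjection $\sA=\sT/\sK\twoheadrightarrow(\bC^*)^\bp/\im(\iota_\bp)$.

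I would then invoke the elementary fact that a torus acting transitively on a torus by translations has a fixed point iff the latter is a point: this gives that $[O_\bp/\sK]$ contains a $\sT$-fixed point exactly when $|\bp|=\rk\iota_\bp$, i.e. when the rows $\{\iota_i\}_{i\in\bp}$ are linearly independent, and that in this case the whole stratum is a single fixed point, yielding the claimed bijection. For completeness I would remark that linear independence forces $|\bp|\le k$; when $|\bp|=k$, total unimodularity gives $|\det\iota_\bp|=1$, so $\Stab_\sK$ is trivial and the fixed point is reduced and isolated, matching the size-$k$ description in the Introduction, whereas a proper subset $|\bp|<k$ produces a positive-dimensional gerbe $B(\bC^*)^{\,k-|\bp|}$.

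The main (and really the only) obstacle is the stack-theoretic bookkeeping: one has to be careful that "$\sT$-fixed point" is interpreted as an $\sA$-fixed object of the quotient groupoid, and that the residual $\sA$-action on each stratum is genuinely the transitive translation action on its coarse torus; once these are pinned down the argument is purely linear algebra. A secondary, purely conventional point is whether to count the positive-dimensional gerbes with $|\bp|<k$ among the fixed points — irrelevant for the isolated reduced fixed points that the rest of the paper uses.
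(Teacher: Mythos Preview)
Your proposal is correct and follows essentially the same approach as the paper: stratify $\bC^n$ by $\sT$-orbits $O_\bp$ and observe that $[O_\bp/\sK]$ collapses to a point exactly when the rows of $\iota$ indexed by $\bp$ are independent. The paper's proof is in fact a one-line sketch of the size-$k$ case only, so your treatment---covering both directions, tracking the residual $\sA$-action on the coarse space, and distinguishing the reduced fixed points ($|\bp|=k$) from the gerbes ($|\bp|<k$)---is more thorough than what appears there.
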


\begin{proof}
	Geometrically, the locally closed subset $\{ x\in \bC^n \mid  x_i \neq 0, \ i\in \bp; x_i = 0, \ i\not\in \bp \} \cong (\bC^*)^k$ in $\bC^n$ defines a closed substack in $\fX$ which is isomorphic to $[(\bC^*)^k / \sK] \cong \pt$, and invariant under the $\sT$-action, in the sense of \cite{Rom}. 
\end{proof}

By abuse of notation, we will also denote this closed substack by $\bp$, and write $\bp\in \fX^\sT$. 

\subsection{K\"ahler cone and GIT quotients}

Let $\Lie_\bR (\sK^\vee) \cong \bR^k$ be  the (real) Lie algebra of the character group of $\sK$, which can be identified with the space of stability conditions one can take when performing GIT quotients. Given any $\theta \in \Lie_\bR (\sK^\vee)$, the GIT theory \cite{MFK} defines an open subscheme in $\fX$:
$$
X_\theta := \bC^n /\!/_\theta \sK. 
$$
By results on the variation of GIT, the space of stability conditions $\Lie_\bR (\sK^\vee)$ admits a wall-and-chamber structure. In other words, there exist certain walls (i.e. codimension one subsets) in $\Lie_\bR (\sK^\vee)$, the connected components of whom are called chambers, such that when we vary $\theta$ in a single chamber, the resulting space $X_\theta$ stays the same. 

In this paper we will only consider the case where $\theta$ is chosen \emph{generically}, i.e. avoiding all the walls. By the totally unimodularity of $\iota$, if nonempty, $X_\theta$ obtained for generic $\theta$ is always a smooth toric variety of dimension $d$. 

The action by the torus $\sT$ descends naturally to the quotient. However, not all fixed points $\bp$ of $\fX$ lie in the quotient $X_\theta$. Recall that for generic $\theta$, the GIT quotient can be defined as $X_\theta = \bC^{n,s} / \sK$, where $\bC^{n,s} \subset \bC^n$ is the stable locus, determined by the stability condition $m$. It might happen that representatives for $\bp$ in $\fX$ fall in the unstable locus, and hence $\bp$ is excluded in the GIT quotient procedure. 
\begin{Definition} \label{Kahler-cone}
	Let $\bp\in \fX$ be a $\sT$-fixed point. The \emph{K\"ahler cone} $\cK(\bp)$ associated with $\bp$ is defined as 
	$$
	\cK(\bp) := \{ \theta \in \Lie_\bR (\sK^\vee) \mid  \bp \in X_\theta \}. 
	$$
	We also define the \emph{effective cone} $\Eff (\bp) := \cK(\bp)^\vee$ as the dual of $\cK(\bp)$, which lies in $\Lie_\bR (\sK)$. 
\end{Definition}

The following lemma provides a combinatorial description of $\cK(\bp)$. 

\begin{Lemma}
	$\cK(\bp)$ is the interior of the cone in $\bR^k$ generated by the $i$-th rows of $\iota$ with $i\in \bp$. 
\end{Lemma}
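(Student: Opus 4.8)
The plan is to convert the GIT-semistability condition for the action of $\sK$ on $\bC^n$ into the combinatorics of the rows $\iota_i$, and then determine exactly when the orbit underlying the substack $\bp$ descends to a point of the quotient. Concretely, I would recall that $X_\theta = \Proj \bigoplus_{d\ge 0} \Gamma(\bC^n, \cO)_{d\theta}$, where $\Gamma(\bC^n,\cO)_{d\theta}$ is spanned by the monomials $x^a = \prod_i x_i^{a_i}$, $a\in\bZ_{\ge 0}^n$, whose $\sK$-weight $\sum_i a_i\iota_i$ equals $d\theta$; here one uses that the $\sK$-weight of the coordinate $x_i$ is precisely the $i$-th row $\iota_i\in\bZ^k$ of $\iota$, viewed in the character lattice of $\sK$. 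Hence a point $x$ with support $S = \{i : x_i\ne 0\}$ is $\theta$-semistable iff some monomial supported on $S$ has weight a positive multiple of $\theta$, i.e. iff $\theta \in \operatorname{Cone}(\iota_i : i\in S) := \sum_{i\in S}\bR_{\ge0}\iota_i$. Since $X_\theta$ depends only on the chamber of $\theta$, it suffices to test this for rational $\theta$, where the monomial criterion applies literally; this also disposes of the minor point that $\Lie_\bR(\sK^\vee)$ contains irrational $\theta$.

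Next I would specialize to the locally closed orbit with support exactly $\bp$. Total unimodularity of $\iota$ together with linear independence of $\{\iota_i : i\in\bp\}$ forces the $k\times k$ submatrix $\iota_\bp$ on the rows indexed by $\bp$ to have determinant $\pm1$, so $\sK$ acts simply transitively on $(\bC^*)^\bp\times\{0\}$: the orbit $\sK\cdot x$ is $k$-dimensional with trivial stabilizer and has closure $\bC^\bp\times\{0\}$ in $\bC^n$, meeting every coordinate stratum indexed by a subset $\bp'\subseteq\bp$. Consequently $\bp\in X_\theta$ iff $x$ is $\theta$-stable, iff $x$ is $\theta$-semistable and no point of $\bC^\bp\times\{0\}$ of strictly smaller support is $\theta$-semistable. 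By the previous step this is equivalent to $\theta\in\operatorname{Cone}(\iota_i : i\in\bp)$ together with $\theta\notin\operatorname{Cone}(\iota_i : i\in\bp')$ for every proper subset $\bp'\subsetneq\bp$. Since $\{\iota_i : i\in\bp\}$ is linearly independent, $\operatorname{Cone}(\iota_i : i\in\bp)$ is a simplicial cone whose boundary is exactly the union of the subcones $\operatorname{Cone}(\iota_i : i\in\bp')$, $\bp'\subsetneq\bp$; so the two conditions together say precisely that $\theta$ lies in the interior of $\operatorname{Cone}(\iota_i : i\in\bp)$, which is the claim. When $\theta$ is generic one can shortcut: semistability equals stability, so $\bp\in X_\theta\iff\theta\in\operatorname{Cone}(\iota_i : i\in\bp)$, and genericity then automatically puts $\theta$ in the interior, the boundary faces being walls.

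The routine ingredients — identifying weight spaces with monomials, and describing torus-orbit closures — are standard. The step that requires genuine care, and where the real content lies, is the passage from semistability to stability for the support-$\bp$ orbit: one must correctly match ``a smaller-support point in the orbit closure is semistable'' with ``$\theta$ lies on a proper face of $\operatorname{Cone}(\iota_i : i\in\bp)$''. This is exactly where linear independence of the rows $\iota_i$, $i\in\bp$ — equivalently, the fact that the facets of a simplicial cone are obtained by dropping one generator — is used, and it is the only nontrivial point in the argument.
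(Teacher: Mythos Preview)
Your argument is correct, but it follows a different route from the paper's. The paper proves the lemma by a direct application of the Hilbert--Mumford criterion: for a representative $x$ of $\bp$ and a one-parameter subgroup $\lambda = (t^{\lambda_1},\dots,t^{\lambda_k})$, the limit $\lim_{t\to 0}\lambda(t)\cdot x$ exists iff $\sum_j \iota_{ij}\lambda_j\ge 0$ for all $i\in\bp$, and stability demands $\langle\lambda,\theta\rangle>0$ for all such nontrivial $\lambda$; dualizing this cone condition gives exactly the interior of $\operatorname{Cone}(\iota_i:i\in\bp)$. Your approach instead works on the side of semi-invariants: you identify the $d\theta$-weight space with monomials, read off semistability as $\theta\in\operatorname{Cone}(\iota_i:i\in\bp)$, and then upgrade to stability by checking that no boundary orbit in $\overline{\sK\cdot x}=\bC^{\bp}\times\{0\}$ is semistable, which by the simplicial structure pins $\theta$ to the interior. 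The Hilbert--Mumford proof is shorter and avoids the orbit-closure bookkeeping; your invariant-theoretic proof is more self-contained (it does not invoke Hilbert--Mumford as a black box) and makes transparent why the passage from \emph{semistable} to \emph{stable} is exactly the passage from the closed cone to its interior. Your closing remark that for generic $\theta$ semistability and stability coincide, collapsing the argument, is also a useful shortcut not made explicit in the paper.
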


\begin{proof}
	Let $m\in \Lie_\bR (\sK^\vee)$ be a stability condition. To tell whether $p\in X_m$, or equivalently, to tell whether a representative $x\in \bC^n$ of $\bp$ is stable, we apply the Hilbert--Mumford criterion. It states that $x \in \bC^n$ is stable, if and only if for any 1-parameter subgroup $\lambda: \bC^* \to \sK$, either the limit $\lim_{t\to 0} \lambda (t) \cdot x$ does not exist, or $\langle \lambda, \theta \rangle > 0$.
	
	A general 1-parameter subgroup $\lambda : \bC^* \to \sK$ is of the form $\lambda (t) = (t^{\lambda_1}, \cdots, t^{\lambda_k} )$, with $\lambda_1, \cdots, \lambda_k \in \bZ$. It acts on $x\in \bC^n$ as
	$$
	\lambda (t) \cdot x = \left( t^{\sum_{j=1}^k \iota_{1j} \lambda_j } x_1 , \cdots, t^{ \sum_{j=1}^k \iota_{nj} \lambda_j } x_n \right) . 
	$$
	A representative of $\bp$ can be taken as an $x\in \bC^n$, such that $x_i \neq 0$ for $i\in \bp$, and $x_i = 0$ for $i\not\in \bp$. The limit $\lim_{t\to 0} \lambda(t) \cdot x$ exists if and only if $\sum_{j=1}^k \iota_{ij} \lambda_j \geq 0$, for all $i\in \bp$. The Hilbert--Mumford criterion then implies that $\langle \lambda, \theta \rangle >0$ for all such $\lambda$. Therefore the lemma holds. 
\end{proof}

Consider all fixed points $\bp\in \fX$. The closure of each K\"ahler cone $\cK(\bp)$ is a rational polyhetral strictly convex cone in $\Lie_\bR (\sK^\vee)$. The codimension-one boundaries of such cones in $\Lie_\bR (\sK^\vee)$ then form the walls in the variation of GIT\footnote{The fan determined by such walls is the so-called \emph{secondary fan}. }. We have the following direct description of the chambers.

\begin{Lemma}
	Let $X \subset \fX$ be a generic GIT quotient. Then the K\"ahler cone $\cK (X)$ of $X$ is a chamber in the wall-and-chamber structure of the variation of GIT. More precisely, we have 
	$$
	\cK(X) = \bigcap_{\bp\in X} \cK(\bp), \qquad \Eff(X) = \bigcup_{\bp\in X} \Eff (\bp). 
	$$ 
\end{Lemma}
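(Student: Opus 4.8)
The plan is to identify the K\"ahler cone $\cK(X)$ combinatorially with the set of generic $\theta$ that lie in every $\cK(\bp)$ for $\bp$ a fixed point of $X$, and then to upgrade this to the stated equality of cones. First I would argue the inclusion $\cK(X) \subseteq \bigcap_{\bp \in X} \cK(\bp)$: by definition of $\cK(X)$ as a chamber of the secondary fan, a generic $\theta \in \cK(X)$ yields a fixed GIT quotient $X_\theta = X$; since every fixed point $\bp$ of $X$ arises from a $\sT$-fixed point of $\fX$ that is $\theta$-stable, we have $\bp \in X_\theta$, hence $\theta \in \cK(\bp)$ by Definition \ref{Kahler-cone}. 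For the reverse inclusion, I would use the previous lemma describing $\cK(\bp)$ as the interior of the cone generated by the rows of $\iota$ indexed by $\bp$: if $\theta$ is generic and lies in $\cK(\bp)$ for every fixed point $\bp$ of $X$, then each such $\bp$ is $\theta$-stable, so $X_\theta$ contains all the fixed points of $X$; since both $X$ and $X_\theta$ are smooth complete (or at least GKM) toric varieties with the same $\sT$-fixed locus, and a generic GIT chamber is determined by which fixed points survive, $X_\theta = X$, so $\theta \in \cK(X)$. This gives $\cK(X) = \bigcap_{\bp \in X} \cK(\bp)$ on the level of generic points, and since both sides are open cones (the right side is a finite intersection of open cones by the preceding lemma) that agree on a dense subset, they are equal.

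The dual statement $\Eff(X) = \bigcup_{\bp \in X}\Eff(\bp)$ then follows by dualizing. Taking duals reverses inclusions and turns intersections into (closed convex hulls of) unions: $\cK(X)^\vee = \left(\bigcap_{\bp\in X}\cK(\bp)\right)^\vee = \overline{\operatorname{conv}\bigcup_{\bp\in X}\cK(\bp)^\vee} = \overline{\operatorname{conv}\bigcup_{\bp\in X}\Eff(\bp)}$. The remaining point is that this union is already convex and closed, so no hull or closure is needed. For this I would invoke the fact that the $\Eff(\bp)$ for $\bp \in X$ are exactly the maximal cones of the fan of $X$ (equivalently, the maximal cones of the secondary fan's star at the chamber $\cK(X)$), and these cones fit together along common faces to tile $\Eff(X)$; this is the standard correspondence between the torus fixed points of a smooth toric variety and the maximal cones of its fan, combined with the totally unimodular hypothesis on $\iota$ ensuring smoothness and that all the relevant cones are simplicial and unimodular.

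The main obstacle I expect is the reverse inclusion $\bigcap_{\bp \in X}\cK(\bp) \subseteq \cK(X)$, specifically the step asserting that a generic chamber is determined by its set of surviving fixed points. This requires knowing that distinct generic chambers cannot have the same collection of $\sT$-fixed points — i.e. that the map from chambers to subsets of $\fX^\sT$ is injective. This should follow because the stable locus $\bC^{n,s}_\theta$, hence $X_\theta$, is reconstructible from the fixed-point data via the GKM description (each $X_\theta$ being a GKM toric variety with $\sT$-equivariant geometry recoverable from fixed points and $1$-dimensional orbits), but making this precise may need a short argument that the $1$-skeleton is also determined, or alternatively a direct combinatorial argument that the chamber $\cK(X)$ as a subset of $\Lie_\bR(\sK^\vee)$ is recovered as $\bigcap_{\bp\in X}\cK(\bp)$ by re-deriving which $\bp$ are stable from the Hilbert--Mumford analysis already used in the proof of the previous lemma. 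I would lean on the latter: for generic $\theta$, $\bp \in X_\theta$ if and only if $\theta \in \cK(\bp)$, so the set $\{\bp : \theta \in \cK(\bp)\}$ literally equals the fixed-point set of $X_\theta$, and two generic $\theta, \theta'$ with the same such set lie in the same chamber because the walls of the secondary fan are by construction the codimension-one faces of the cones $\overline{\cK(\bp)}$, so $\theta$ and $\theta'$ are separated by no wall.
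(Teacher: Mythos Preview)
The paper states this lemma without proof, so there is nothing to compare against; I will simply assess your argument.

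Your forward inclusion $\cK(X)\subseteq\bigcap_{\bp\in X}\cK(\bp)$ is fine. The reverse inclusion, however, has a real gap. From $\theta\in\bigcap_{\bp\in X}\cK(\bp)$ you only get $X^\sT\subseteq X_\theta^\sT$; you then assert ``the same $\sT$-fixed locus'' and later that $\theta$ and $\theta_0$ ``have the same such set,'' but nothing prevents $X_\theta$ from having \emph{extra} fixed points $\bq\notin X$ with $\theta\in\cK(\bq)$. Your wall argument does not exclude this: the segment $[\theta_0,\theta]$ could cross $\partial\cK(\bq)$ for some $\bq\notin X$ while staying inside every $\cK(\bp)$ with $\bp\in X$. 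What is missing is the observation that every facet of $\overline{\cK(X)}$ is contained in $\partial\cK(\bp)$ for some $\bp\in X$: crossing a wall into an adjacent chamber $\cK(X')$ forces $X'\neq X$, hence some $\bp\in X\setminus X'$ exists, and that wall lies in $\partial\cK(\bp)$ (and a facet on the boundary of the support of the secondary fan is handled similarly, since no interior point of any $\cK(\bp)$ can lie there). Once you know this, the open convex set $\bigcap_{\bp\in X}\cK(\bp)$ is disjoint from $\partial\cK(X)$, so it cannot escape the chamber.

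Your argument for the dual identity also needs repair. The cones $\Eff(\bp)=\cK(\bp)^\vee$ live in $\Lie_\bR(\sK)\cong\bR^k$, whereas the maximal cones of the fan of $X$ live in $\Lie_\bR(\sA)\cong\bR^{n-k}$; these are different lattices (and different dimensions), so $\Eff(\bp)$ are \emph{not} the maximal cones of the fan of $X$, and your appeal to ``the standard correspondence between torus fixed points and maximal cones'' does not give the convexity of $\bigcup_{\bp\in X}\Eff(\bp)$. You need a separate argument that this union is convex (equivalently, that the simplicial cones $\Eff(\bp)$, $\bp\in X$, glue along shared facets to cover the polyhedral cone $\cK(X)^\vee$); this is true but is a statement about a fan in $\Lie_\bR(\sK)$, not the defining fan of $X$.
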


\begin{Example} \label{Bl(P^2)}

Consider the exact sequence	
	\begin{equation}\xymatrix{
		0 \ar[r] & \bZ^2 \ar[r]^-{\iota} & \bZ^4 \ar[r]^-\beta & \bZ^2 \ar[r] & 0,
	} 
	\end{equation}
where
$$
\iota = \begin{pmatrix}
1 & 1 \\
0 & 1 \\
1 & 0 \\
0 & 1 
\end{pmatrix}, \qquad \beta = \begin{pmatrix}
1 & 0  & -1 & -1 \\
0 & 1 & 0 & -1
\end{pmatrix}. 
$$
Denote $C_1 = \bR_+ \cdot (1,0) + \bR_+ \cdot  (1,1)$ \footnote{We denote $\bR_+ := (0, +\infty)$, and $\bR_- := (-\infty, 0)$. }, $C_2 = \bR_+ \cdot (1,1) + \bR_+ \cdot (0,1)$. There are 5 fixed points:
\begin{itemize}
\setlength{\parskip}{1ex}	
	
\item $\{1,2\}$, with K\"ahler cone $C_2$; 

\item $\{1, 3\}$, with K\"ahler cone $C_1$; 

\item $\{1, 4\}$, with K\"ahler cone $C_2$; 

\item $\{2,3\}$, with K\"ahler cone $\bR_+^2$, whose closure is $\overline C_1 \cup \overline C_2$; 

\item $\{3,4\}$, with K\"ahler cone $\bR_+^2$. 

\end{itemize}	
We can see that $C_1$ and $C_2$ are the two chambers in the variation of GIT. The GIT quotients for them are the following. 

\begin{itemize}
	\setlength{\parskip}{1ex}	
	
	\item Chamber $C_1$, $X = \bP^2$, containing fixed points $\{1,3\}$, $\{2,3\}$, $\{3,4\}$. 
	
	\item Chamber $C_2$, $X = \Bl_{\pt} \bP^2$, containing fixed points $\{1,3\}$, $\{1,4\}$, $\{2,3\}$, $\{2,4\}$. 

\end{itemize}
See Figure 1 below.	
\end{Example}

\begin{Example} \label{mirror-of-Bl(P^2)}
	Let's consider the dual exact sequence of that in Example \ref{Bl(P^2)}, i.e.
	$$
	\xymatrix{
		0 \ar[r] & \bZ^2 \ar[r]^-\iota & \bZ^4 \ar[r]^-\beta & \bZ^2 \ar[r] & 0,
	} 
	$$
	where
	$$
	\iota = \begin{pmatrix}
	1 & 0 \\
	0 & 1 \\
	-1 & 0 \\
	-1 & -1
	\end{pmatrix}, \qquad \beta = \begin{pmatrix}
	1 & 0 & 1 & 0 \\
	1 & 1 & 0 & 1
	\end{pmatrix}. 
	$$
Denote $C_1 = \bR_+^2$, $C_2 = \bR_- \times \bR_+$, $C_3 = \bR_+ \cdot (-1,0) + \bR_+ \cdot (-1,-1)$, $C_4 = \bR \cdot (-1, -1) + \bR_+ \cdot (1,0)$. There are also 5 fixed points:

\begin{itemize}
\setlength{\parskip}{1ex}		

\item $\{3,4\}$, with K\"ahler cone $C_3$; 

\item $\{2,4\}$, with K\"ahler cone $\bR_+ \cdot (0,1) + \bR_+ \cdot (-1,-1)$, whose closure is $\overline C_2 \cup \overline C_3$; 

\item $\{2,3\}$, with K\"ahler cone $C_2$; 

\item $\{1,4\}$, with K\"ahler cone $C_4$; 

\item $\{1,2\}$, with K\"ahler cone $C_1$. 

\end{itemize}
$C_i \ (1\leq 4)$ are the 4 chambers in the variation of GIT. The GIT quotients are the following. 

\begin{itemize}
\setlength{\parskip}{1ex}	

\item Chamber $C_1$, $X = \bC^2$, containing fixed point $\{1,2\}$. 

\item Chamber $C_2$, $X = \Bl_{\pt} \bC^2$, containing fixed points $\{2,3\}$, $\{2,4\}$. 

\item Chamber $C_3$, $X = \Bl_{\pt} \bC^2$, containing fixed points $\{2,4\}$, $\{3,4\}$. 

\item Chamber $C_4$, $X = \bC^2$, containing fixed point $\{1,4\}$. 
\end{itemize} 
See Figure 2 below.
\begin{align*}
\begin{tikzpicture}[line cap=round,line join=round,>=triangle 45,x=1.9cm,y=1.9cm]
\draw [color=cqcqcq,, xstep=0.5cm,ystep=0.5cm] (-4.594651950510948,-0.671428433701874);
\clip(-4.594651950510948,-0.771428433701874) rectangle (4.875368844973067,2.3217392055655575);
\fill[line width=1pt,color=yqyqyq,fill=yqyqyq,fill opacity=1] (-3,0) -- (-1,0) -- (-1,2) -- cycle;
\fill[line width=1pt,color=cqcqcq,fill=cqcqcq,fill opacity=1] (-3,0) -- (-1,2) -- (-3,2) -- cycle;
\fill[line width=1pt,color=aqaqaq,fill=aqaqaq,fill opacity=1] (1,1) -- (1,0) -- (2,1) -- cycle;
\fill[line width=1pt,color=wqwqwq,fill=wqwqwq,fill opacity=1] (2,1) -- (3,1) -- (3,2) -- (2,2) -- cycle;
\fill[line width=1pt,color=yqyqyq,fill=yqyqyq,fill opacity=1] (2,1) -- (2,2) -- (1,2) -- (1,1) -- cycle;
\fill[line width=1pt,color=cqcqcq,fill=cqcqcq,fill opacity=1] (2,1) -- (1,0) -- (3,0) -- (3,1) -- cycle;
\draw [->,line width=1.5pt] (-3,0) -- (-1,0);
\draw [->,line width=1.5pt] (-3,0) -- (-3,2);
\draw [->,line width=1.5pt] (-3,0) -- (-1,2);
\draw [->,line width=1.5pt] (2,1) -- (3,1);
\draw [->,line width=1.5pt] (2,1) -- (2,2);
\draw [->,line width=1.5pt] (2,1) -- (1,0);
\draw [->,line width=1.5pt] (2,1) -- (1,1);
\draw (-2.407709574462179,-0.16015520639458675) node[anchor=north west] {Figure 1};
\draw (1.6089034260868553,-0.1506596673861966) node[anchor=north west] {Figure 2};
\begin{scriptsize}
\draw[color=black] (-1.9709147800762319,-0.122469890772929768) node {$v_{1}$};
\draw[color=black] (-3.11037946108305,1.154976946267447) node {$v_{2}$};
\draw[color=black] (-1.4296690565979935,1.7152137477624652) node {$v_{3}$};
\draw[color=black] (2.871810114202746,0.7840572441164247) node {$v_1$};
\draw[color=black] (2.1691402275818743,1.8620977619806088) node {$v_2$};
\draw[color=black] (1.343028333851931,0.1813873574955546) node {$v_3$};
\draw[color=black] (1.1436220146757379,1.2209412583345685) node {$v_{4}$};
\draw[color=yqyqyq] (-1.6005887587490162,0.8131375419654022) node {$C_{1}$};
\draw[color=cqcqcq] (-2.274772028344717,1.4778252725527117) node {$C_{2}$};
\draw[color=aqaqaq] (1.3905060288938818,0.8131375419654022) node {$C_3$};
\draw[color=wqwqwq] (2.5584573269258706,1.6392494356953442) node {$C_1$};
\draw[color=yqyqyq] (1.5614257310449045,1.6392494356953442) node {$C_2$};
\draw[color=cqcqcq] (2.311573312707727,0.6422178398143797) node {$C_4$};
\end{scriptsize}
\end{tikzpicture}	
\end{align*}
\end{Example}

\begin{Remark}
In Section \ref{mirror}, we will see that Example \ref{Bl(P^2)} and \ref{mirror-of-Bl(P^2)} are mirror to each other. We can directly see that there is a natural bijection between the fixed points. However, this symmetry only exists if we include all $\sT$-fixed points of the stack $\fX$, i.e. of all possible GIT quotients. Also it might happen that different GIT quotients share a few common fixed points. 
\end{Remark}

\subsection{Attracting cone}

In this subsection, following Maulik--Okounkov \cite{MO}, we would like to associated to each fixed point $\bp\in \fX$ a cone in the space of equivariant parameters. 

Let $\bp \in \fX$ be a $\sT$-fixed point. First, let's choose $\theta \in \cK (\bp)$, and consider the GIT quotient $X = X_\theta$. By definition of the K\"ahler cone, $\bp$ is then a $\sT$-fixed point in $X$. 

The action of $\sT$ contains the kernel $\sK$ which acts trivially. Hence the actual torus acting on $X$ is the quotient torus $\sA = \sT / \sK \cong (\bC^*)^r$. Its Lie algebra $\Lie_\bR (\sA) \cong \bR^r$ can be identified with the space of cocharacters of $\sA$. For any cocharacter $\sigma: \bC^* \to \sA$, the 1-parameter subgroup $\bC^*_\sigma$ induced by it acts on $X$ and gives a Bialynicki--Birula stratification. Each stratum will be a union of $\sA$-orbits. When $\sigma$ is \emph{generic} (which we will always assume), the fixed loci of $\bC^*_\sigma$ will be the same as the fixed loci of $\sA$ itself. Therefore, the BB strata are the ``attracting sets", parameterized by the fixed points 
$$
\Attr_\bp^\theta := \{ q \in X_\theta \mid \lim_{t\to 0} \sigma (t) \cdot q = \bp \}. 
$$
There exists a unique $\bp$, such that $\Attr_\bp^\theta$ is the the largest  stratum, i.e. has the same dimension as $X_\theta$. In particular, this $\Attr_\bp^\theta$ contains points $q\in X_\theta$ whose representatives $x$ has all $x_i \neq 0$. In other words, this $\Attr_\bp^\theta$ contains the largest open $T$-orbit. We call such a $\bp$ the \emph{minimal} fixed point. 

\begin{Definition}
	The \emph{attracting cone} $\cA (\bp)$ is defined as
	$$
	\cA(\bp) := \{ \sigma \in \Lie_\bR (\sA)  \mid \bp \text{ is minimal under } \bC^*_\sigma \} . 
	$$
\end{Definition}

A priori, the definition depends on the choice of $\theta$. However, we can easily see that it actually does not depend on $\theta$. 

A cocharacter $\tilde\sigma \in \Lie_\bR (\sT)$ is called a \emph{lift} of $\sigma$ if it is a preimage of $\sigma$ along the map $\beta: \Lie_\bR(\sT) \to \Lie_\bR (\sA)$. 

\begin{Lemma} \label{attracting-cone}
	\begin{enumerate}[1)]
		
		\item There is a unique lift $\tilde\sigma = (\tilde\sigma_1, \cdots, \tilde\sigma_n)$ such that $\tilde\sigma_i = 0$,  for all $i \in \bp$. 
		
		\item $\bp$ is minimal, if and only if for the lift $\tilde\sigma$ in 1), we have $\tilde\sigma_i >0$, for all $i\not\in \bp$. In particular, the minimality of $\bp$ does not depend on the choice of the stability condition $\theta$. 
		
		\item $\cA(\bp)$ is the interior of the cone in $\Lie_\bR (\sA)$ generated by the $i$-th columns of $\beta$ with $i\not\in \bp$. 
		
	\end{enumerate}
\end{Lemma}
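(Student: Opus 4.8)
The plan is to reduce all three assertions to linear algebra over the coordinate subspace indexed by $\bp$, using the single key fact that the $k\times k$ submatrix $\iota_\bp$ of $\iota$ on the rows $i\in\bp$ is invertible over $\bZ$: its determinant is $\pm 1$ since those rows are linearly independent and $\iota$ is totally unimodular. For part 1), I would note that $\beta\colon\Lie_\bR(\sT)=\bR^n\to\Lie_\bR(\sA)=\bR^r$ is surjective with kernel $\iota(\bR^k)$, so any two lifts of $\sigma$ differ by $\iota(v)$ with $v\in\bR^k$; starting from an arbitrary lift $\tilde\sigma^0$ and subtracting $\iota\,\iota_\bp^{-1}$ applied to the $\bp$-coordinates of $\tilde\sigma^0$ produces a lift whose coordinates in $\bp$ vanish. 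Uniqueness is immediate: the difference of two such lifts is $\iota(v)$ with $\iota_\bp v=0$, hence $v=0$. Since $\iota_\bp^{-1}$ is integral, the canonical lift of an integral cocharacter is again integral, a point needed below.

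For part 2), fix $\theta\in\cK(\bp)$, so that $\bp$ is a torus-fixed point of the smooth toric variety $X_\theta=\bC^{n,s}/\sK$, of the same dimension $r$ as the acting torus $\sA$. For generic $\sigma$ the fixed locus of $\bC^*_\sigma$ is the finite set of torus-fixed points, and the unique largest Bialynicki--Birula stratum is the one containing the dense $\sA$-orbit; thus $\bp$ is minimal precisely when $\Attr_\bp^\theta$ contains that orbit, and by homogeneity this holds iff $\lim_{t\to 0}\sigma(t)\cdot q=\bp$ for one (equivalently every) $q$ in the open orbit. Represent such a $q$ by $x\in\bC^{n,s}$ with all $x_i\neq 0$; since $\sK$ acts simply transitively on the $\bp$-coordinates, I may normalize $x_i=1$ for $i\in\bp$. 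Computing $\lim_{t\to 0}\sigma(t)\cdot q$ in the GIT quotient means acting on $x$ by some integral lift $\tilde\sigma+\iota v$ of $\sigma$ (and possibly rescaling by an element of $(\bC^*)^k$): the limit exists and represents $\bp$ iff the exponents on the $\bp$-coordinates all vanish, which forces $\iota_\bp v=0$ and hence $v=0$, and then the exponents off $\bp$ must all be strictly positive, i.e.\ $\tilde\sigma_i>0$ for all $i\notin\bp$, where $\tilde\sigma$ is the canonical lift of part 1). As this condition does not involve $\theta$, minimality is independent of the stability condition.

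For part 3), I would observe that the composite of the inclusion of the coordinate subspace spanned by $\{e_i:i\notin\bp\}$ into $\bR^n$ with $\beta$ is an isomorphism onto $\Lie_\bR(\sA)$: it is injective because an element $\iota(v)$ vanishing on $\bp$ has $\iota_\bp v=0$, so $v=0$, and both sides have dimension $r=n-k$. Hence the columns $\beta_i$ of $\beta$ with $i\notin\bp$ form a basis of $\Lie_\bR(\sA)$, and for the canonical lift one has $\sigma=\beta\tilde\sigma=\sum_{i\notin\bp}\tilde\sigma_i\,\beta_i$, so the $\tilde\sigma_i$ are exactly the coordinates of $\sigma$ in that basis. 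Combined with part 2), $\cA(\bp)=\{\sigma:\tilde\sigma_i>0\text{ for all }i\notin\bp\}$ is precisely the interior of the simplicial cone generated by $\{\beta_i:i\notin\bp\}$.

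The main obstacle is the GIT bookkeeping in part 2): one has to argue with care that a limit of $\sigma(t)\cdot q$ \emph{inside the quotient} $X_\theta$ is faithfully detected by the canonical lift $\tilde\sigma$. A priori the arc $t\mapsto\sigma(t)\cdot q$ might acquire a limit only after correcting it by a $\sK$-valued function of $t$, whose leading behaviour is an integral cocharacter of $\sK$ together with a harmless constant rescaling; the content of the argument is that, once the representative $x$ has been normalized to be $1$ on $\bp$, no such correction can alter whether the limit is $\bp$, and in fact the only correction compatible with the limit representing $\bp$ is the trivial one. Everything else is elementary manipulation with the unimodular matrix $\iota_\bp$.
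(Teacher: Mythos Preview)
Your proposal is correct and follows essentially the same approach as the paper: both arguments reduce part 1) to the fact that the fibre $\beta^{-1}(\sigma)$ meets the coordinate subspace $\{x_i=0:i\in\bp\}$ in a single point, both arguments for part 2) compute the limit of the canonical lift acting on a generic representative, and both arguments for part 3) read $\cA(\bp)$ as the image under $\beta$ of the orthant $\{0\}_\bp\times(\bR_{>0})_{\bp^c}$.

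Where your write-up actually goes a bit further than the paper is in part 2): the paper simply asserts that the limit of $\tilde\sigma(t)\cdot x$ in $\bC^n$ faithfully detects the limit of $\sigma(t)\cdot q$ in the quotient $X_\theta$, whereas you correctly isolate this as the nontrivial point and sketch why no $\sK$-valued correction arc can change the answer once the representative is normalized to be $1$ on $\bp$ (the leading cocharacter $\mu$ of any such correction must satisfy $\iota_\bp\mu=0$, hence $\mu=0$). Your explicit use of the invertibility of $\iota_\bp$ over $\bZ$ in parts 1) and 3) is also cleaner than the paper's appeal to transversality (which, incidentally, does not actually require $\sigma$ to be generic). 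These are refinements rather than a different strategy.
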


\begin{proof}
	For 1), all lifts $\tilde\sigma$ in $\Lie_\bR (\sT)$ form a $k$-dimensional affine linear subspace $\beta^{-1} (\sigma)$. Moreover, since $\sigma$ is generic, $\beta^{-1} (\sigma)$ intersects transversally with $x_i = 0$, $i\in \bp$. Therefore, they intersect at a unique point. 
	
	For 2), we know that $\bp$ is minimal if and only if for a generic point $q\in X_\theta$, $\lim_{t\to 0} \sigma(t) \cdot q = \bp$. Choose the lift $\tilde\sigma$ as in 1), and let $x$ be a representative of a generic point, which implies that $x_i \neq 0$ for all $i$. Then $\tilde\sigma(t) \cdot x$ is a representative of $\sigma(t) \cdot q$, and we see that
	$$
	( \tilde\sigma(t) \cdot x )_i = \left\{ \begin{aligned}
	& x_i, \qquad && i\in \bp \\
	& t^{\tilde\sigma_i} x_i, \qquad && i\not\in \bp
	\end{aligned} \right. 
	$$
	The limit as $t\to 0$ is $\bp$ if and only if $\tilde\sigma_i >0$ for all $i\not\in \bp$. 
	
	By 2), we see that the cone $\cA (\bp)$ is exactly the image under $\beta$ of the cone $\{0\} \times \bR_+^r$, where the $\{0\}$ is for the indices in $\bp$, and $\bR_+^r$ is for the indices not in $\bp$. The image is just the cone described in 3). 
\end{proof}

\subsection{Equivariant $K$-theory, K\"ahler and equivariant parameters} \label{parameters}

The $\sA$-equivariant $K$-theory ring of $\fX$ is
$$
K_\sA (\fX) = K_{\sA} ( [\bC^n / \sK ] ) \cong K_{\sK \times \sA } (\pt) . 
$$
However, the product $\sK \times \sA$ here is not canonical. There is no natural basis for the equivariant parameters of $\sA$; more precisely, for different fixed points $\bp$, there are different choices of decompositions $\sK \times \sA$ and different coordinates on $\sA$. 

A better way is to introduce the \emph{redundant parameters}. Recall the exact sequence
$$
\xymatrix{
	1 \ar[r] & \sK \ar[r] & \sT \ar[r] & \sA \ar[r] & 1 , 
} 
$$
where we view
$$
\sT = \Spec K_{\sT} (\pt) = \Spec \bC [a_1^{\pm 1}, \cdots, a_n^{\pm 1} ], \qquad \sA = \Spec K_{\sA} (\pt). 
$$
Here $a_i$ are the standard coordinates on $\sT$, or functions associated with the standard basis. We call them the \emph{redundant equivariant parameters}. We will then call functions on the quotient torus $\sA$ the \emph{effective equivariant parameters}. 

Similar phenomenon occurs for the K\"ahler parameters, and the dual exact sequence
$$
\xymatrix{
1 \ar[r] & \sA^\vee \ar[r] & \sT^\vee \ar[r] & \sK^\vee \ar[r] & 1.  
}
$$
We write
$$
\sT^\vee = \Spec K_{\sT^\vee} (\pt) = \Spec \bC [z_1^{\pm 1}, \cdots, z_n^{\pm 1} ], 
$$
where $z_i$'s are the \emph{redundant K\"ahler parameters}. Functions on $\sK^\vee$ are called the \emph{effective K\"ahler parameters}. We denote the standard coordinates on $\sK^\vee$ by 
$$
Q_j, \qquad 1\leq j\leq k. 
$$
The relationship between $z_i$'s and $Q_j$'s is
$$
Q_j = \prod_{i=1}^n z_i^{\iota_{ij}}, \qquad 1\leq j\leq k. 
$$

\begin{Remark}
The effective parameters are what usually appear in the literature, which records the degrees of curves. The redundant ones, introduced by the third author in a previous work \cite{SZ}, is a set of globally chosen coordinates on the K\"ahler and equivariant tori, which would make the presentation of the $I$-functions much more convenient. The mirror map also looks more concise in terms of the redundant parameters. 
\end{Remark}

Now we apply the base change $\sT \to \sA$ to the $\sA$-equivariant theory and consider the $\sT$-equivariant $K$-theory of $\fX$. The ring admits a global presentation
\begin{equation} \label{pres-u}
K_{\sT} (\fX) = \bC [ a_1^{\pm 1} , \cdots, a_n^{\pm 1} , u_1^{\pm 1} , \cdots, u_n^{\pm 1} ] / \langle \prod_{j=1}^n \left( \frac{u_j}{a_j} \right)^{\beta_{ij}} = 1,  \ 1 \leq i\leq r \rangle, 
\end{equation}
where the $u_i$'s are the characters associated with the 1-dimensional representations given by the standard basis in $\bC^n$. 

It is easy to see that the monomials $\prod_{j=1}^n a_j^{\beta_{ij}}$ appearing in the relations are functions on the quotient torus $\sA$; hence the ring is indeed a base change from $\sA$. The picture is the following Cartesian diagram
$$
\xymatrix{
	\Spec K_{\sT} (\fX)  \ar[d] \ar[r] & \Spec K_\sA (\fX) \ar[d] \\
	\sT \ar[r] & \sA. 
}
$$

The $K$-theory spectrum $\Spec K_\sT (\fX)$ is a fibration of $k$-torus over $\sT$, and the ring $K_\sT (\fX)$ is of infinite dimensions over $K_\sT (\pt)$. Same holds for $\sA$. An alternative way to look at this is as follows. Let $s_j$ be the $T$-character associated with the $j$-th standard basis vector in $\bC^k$. We have the relation
\begin{equation} \label{u-s}
u_i = a_i \prod_{j=1}^k s_j^{\iota_{ij}}, \qquad 1\leq i\leq n, 
\end{equation}
and hence we have an alternative presentation of the $K$-theory ring
\begin{equation} \label{pres-s}
K_{\sT} (\fX) = \bC [ a_1^{\pm 1} , \cdots, a_n^{\pm 1} , s_1^{\pm 1} , \cdots, s_k^{\pm 1} ]. 
\end{equation}
We see that $s_j$'s form a set of global coordinates of the above fibration. In computations of the later sections, we will always consider $a_i$ and $s_i$ as independent parameters, and $u_i$'s as functions in terms of $a_i$ and $s_i$'s. 

For any (generic) GIT quotient $X \subset \fX$, there is a Kirwan surjection \cite{HL, BH} $K_\sT (\fX) \twoheadrightarrow K_\sT (X)$, under which $u_i$ and $s_i$ map to the corresponding tautological line bundles on $X$, equvariant and nonequivariant respectively. The ring $K_\sT (X)$ then admits similar presentations as (\ref{pres-u}) and (\ref{pres-s}), with some extra relations described by the following lemma. 

\begin{Lemma}
	Let $\bp$ be a fixed point of $\fX$. 
	\begin{enumerate}[1)]
		
		\item $\Spec K_\sT (\bp)$ is the section of the fibration $\Spec K_\sT (\fX) \to \sT$ defined by $u_i = 1$, for all $i\in \bp$. 
		
		\item For any generic GIT quotient $X$, its  $K$-theory ring is
		$$
		\Spec K_\sT (X) = \bigcup_{\bp \in X} \Spec K_\sT (\bp), 
		$$
		where the irreducible components intersect transversally. 
		
	\end{enumerate}
\end{Lemma}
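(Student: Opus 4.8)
The plan is to work entirely with the global presentation (\ref{pres-u}) of $K_\sT(\fX)$ as a fibration of a $k$-torus over $\sT$, and to reinterpret both the fixed point $\bp$ and the GIT quotient $X$ as explicit closed subschemes cut out inside this fibration. For part 1), I would start from the observation, already recorded in the excerpt, that the closed substack $\bp \subset \fX$ is $[(\bC^*)^k/\sK] \cong \pt$, realized as the locus $\{x_i \neq 0 \text{ for } i\in\bp,\ x_i = 0 \text{ for } i\notin\bp\}$. On this locus, each tautological line bundle $u_i$ with $i\in\bp$ is canonically trivialized, since the corresponding coordinate $x_i$ is a nowhere-vanishing section; hence in $K_\sT(\bp)$ the restriction of $u_i$ equals $1$ for $i\in\bp$. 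Conversely, the $k$ equations $u_i = 1$, $i\in\bp$, cut out a section of the $k$-torus fibration $\Spec K_\sT(\fX) \to \sT$, because the fibration is trivialized by the coordinates $s_1,\dots,s_k$ via (\ref{u-s}), and the $k\times k$ submatrix $(\iota_{ij})_{i\in\bp}$ is invertible over $\bZ$ (this is exactly the full-rank condition defining a fixed point, together with total unimodularity, which forces the determinant to be $\pm 1$). So solving $u_i = a_i\prod_j s_j^{\iota_{ij}} = 1$ for $i\in\bp$ expresses the $s_j$ uniquely as monomials in the $a_i$, giving a section isomorphic to $\sT$. Matching this with the Kirwan pullback $K_\sT(\fX) \twoheadrightarrow K_\sT(\bp)$ identifies the two, proving 1).

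For part 2), I would use the fact (recalled just before Definition \ref{Kahler-cone} and in the combinatorial description of GIT quotients) that for generic $\theta$ the stable locus is $\bC^{n,s} = \bigcup_{\bp\in X} \{x_i\neq 0,\ i\in\bp\}$, i.e. $X$ is covered by the affine charts indexed by its fixed points, and on each chart the bundles $u_i$, $i\in\bp$, are trivial. This already shows $\Spec K_\sT(\bp) \subset \Spec K_\sT(X)$ for each $\bp\in X$, and conversely that $K_\sT(X)$ is supported on the union $\bigcup_{\bp\in X}\Spec K_\sT(\bp)$: any closed point of $\Spec K_\sT(X)$ lies over some chart, hence satisfies $u_i = 1$ for the corresponding $\bp$. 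To get the scheme-theoretic statement — that this is an equality and that the components meet transversally — I would argue that the Kirwan map is surjective with kernel generated by the classes $u_i = 1$ of the unstable strata relations, or equivalently compute in a neighbourhood of a point lying on several components: locally $\Spec K_\sT(X)$ looks like the union of coordinate subspaces $\{s_j = \text{(monomial in } a)\}$ indexed by which subset $\bp$ of coordinates is allowed to vanish, and two such components corresponding to $\bp, \bp'$ meet along the locus where the symmetric difference coordinates vanish, with tangent spaces spanning transversally because the relevant submatrices of $\iota$ are unimodular.

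The main obstacle I expect is the transversality claim in 2), and more precisely making the local structure of $\Spec K_\sT(X)$ near a multiply-covered point rigorous: one has to track exactly which monomial relations among the $s_j$ and $a_i$ hold on each component and verify that the ideals $I_\bp = \langle u_i - 1 : i\in\bp\rangle$ satisfy $I_\bp + I_{\bp'} = \langle u_i - 1 : i\in \bp\cup\bp'\rangle$ after passing to the chart where both charts overlap — this is where total unimodularity of $\iota$ is doing real work, ensuring no torsion or multiplicity appears when one intersects the sections. The fixed-point side (part 1) and the set-theoretic containment in part 2 should be essentially formal given the presentations (\ref{pres-u}), (\ref{pres-s}) and the combinatorics of stable loci already set up; it is only the "intersect transversally" assertion that requires a genuine (if short) commutative-algebra computation, which I would phrase via the Koszul-type resolution of $K_\sT(X)$ over $K_\sT(\fX)$ or by reduction to the affine charts.
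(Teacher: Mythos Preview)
The paper states this lemma without proof; it is treated as a standard fact about equivariant $K$-theory of smooth toric varieties (the presentations (\ref{pres-u}) and (\ref{pres-s}) are set up precisely so that the lemma becomes transparent, and the sentence immediately following the lemma simply records the consequence $u_i|_\bp = 1$ for $i\in\bp$). So there is no ``paper's proof'' to compare against; your outline is filling in what the authors take for granted.

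Your approach is correct. For part 1), the argument via trivialization of $u_i$ by the nonvanishing coordinate $x_i$, combined with total unimodularity to invert the $k\times k$ submatrix of $\iota$ over $\bZ$, is exactly right and is implicitly what the paper uses in the system (\ref{solution}) and in the computation of $s_j|_\bp$. For part 2), your decomposition of $X$ into affine charts indexed by fixed points is the standard toric description, and the set-theoretic equality follows. You are also right that the transversality claim is the only place requiring a genuine check; your proposed verification via $I_\bp + I_{\bp'} = \langle u_i - 1 : i\in \bp\cup\bp'\rangle$ is the cleanest way to phrase it, and total unimodularity is indeed what guarantees the intersection is reduced with no multiplicity. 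One small sharpening: rather than invoking a Koszul resolution, it is enough to observe that in the coordinates $s_1,\dots,s_k$ each $\Spec K_\sT(\bp)$ is the graph of a monomial map $\sT \to (\bC^*)^k$, so two such graphs meet inside the ambient torus $\sT\times(\bC^*)^k$, where any intersection of subtori is automatically transversal (or empty).
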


We see that the restriction of the line bundle $u_i$ to a fixed point $\bp$ is $u_i |_\bp = 1$ for $i\in \bp$, and 
$$
\{ u_i |_\bp \mid i\not\in \bp \}
$$
is the unique solution to the system of linear equations
\begin{equation} \label{solution}
\left\{  \begin{aligned} 
& \prod_{j=1}^n \left( \frac{u_j}{a_j} \right)^{\beta_{ij}} = 1 , \qquad && 1\leq i\leq r \\
& u_l = 1, \qquad && l\in \bp . 
\end{aligned} \right. 
\end{equation}

In terms of $s_j$'s, the fixed point $\bp$ is given by the equations
\begin{equation}
\prod_{j=1}^k s_j^{\iota_{lj}} = a_l^{-1}, \qquad l\in \bp. 
\end{equation}

In particular, $u_i |_\bp$'s are functions on $\sA$, i.e. only in terms of effective equivaraint parameters; however $s_j |_\bp$'s involve all redundant equivariant parameters.

\vspace{1cm}

\section{$K$-theoretic $I$-function for toric stacks} \label{sec-3}
The quantum $K$-theory was introduced by Givental \cite{Giv-WDVV} and Y.-P. Lee \cite{Lee} decades ago. Recently, Givental shows that $q$-hypergeometric solutions represent $K$-theoretic Gromov--Witten invariants in the toric case \cite{Giv-perm-V}. Ruan--Zhang \cite{RZ} introduce the level structures and there is a serendipitous discovery that some special toric spaces with certain level structures result in Mock theta functions. Nevertheless, beyond the toric case, much less is known.

Let $X := X_\theta$ be a GIT quotient $V/\!/_{\theta} G$ where $V$ is a vector space and $G$ is a connected complex reductive group. The theory of the moduli space of quasimaps to GIT is established in \cite{CKM} where Ciocan-Fontanine, Kim and Maulik define the cohomological big $I$-function. The first two authors prove the wall-crossing formula which relates big $I$-function and Givental's big $J$-function of $X$ in their following paper \cite{CK}. The $K$-theoretic stable quasimaps invariants are defined by Tseng--You in \cite{TY}.

Let ${\mathcal{Q}}^{\epsilon}_{g,n}(X, d)$ be the moduli stack of $\epsilon$-stable quasimaps \cite{CKM} parametrizing quasimaps $f=(C,p_1, \cdots ,p_n,\mathcal{P},s)$ where $C$ is an $n$-pointed nodal curve of genus $g$, $\mathcal{P}$ is a principal $G$-bundle over $C$, $s$ is a section and $d \in \mathrm{Hom}(\mathrm{Pic}^G(V), \bZ)$. There are natural maps:
\begin{align*}
\ev_{i}: {\mathcal{Q}}^{\epsilon}_{g,n}(X, d) \rightarrow X, \qquad i=1, \cdots, n, 
\end{align*}
given by evaluation at the $i$-th marked point. There are line bundles
\begin{align*}
	\mathbb{L}_{i} \rightarrow  {\mathcal{Q}}^{\epsilon}_{g,n}(X, d) , \qquad i=1, \cdots, n, 
\end{align*}
called universal cotangent line bundles. The fiber of $\mathbb{L}_i$ over a point $(C,p_1, \cdots ,p_n,\mathcal{P},s)$ is the cotangent line to $C$ at the point $p_i$.

The permutation-equivariant $K$-theoretic quasimap invariants with level structures \cite{RZ} are holomorphic characteristics over ${\mathcal{Q}}^{\epsilon}_{g,n}(X,d)$ of the sheaves:
\begin{align}
\left\langle \mathbf{t}(\mathbb{L}_1), \cdots, \mathbf{t}(\mathbb{L}_n) \right\rangle_{g, n, d}^{R,l,S_n,\epsilon} :=\pi_*  \Big( {\mathcal{Q}}^{\epsilon}_{g,n}(X, d) ; \,  \mathcal{O}_{g, n, d}^\vir \otimes \prod_{m,i} \mathbb{L}_{i}^{k}  t_{k,i} \mathrm{ev}_{i}^{*}\left(\phi_{i}\right) \otimes \mathcal{D}^{R,l} \Big)  \label{quasimap-invariants}
\end{align}
where $\mathcal{O}^\vir_{g,n,d}$ is the virtual structure sheaf \cite{Lee}, and  $\mathbf{t}(q)$ is a Laurent polynomial in $q$ defined as follows 
\begin{align*}
	\mathbf{t}(q)=\sum_{m \in \mathbb{Z}} t_{m} q^{m}, \qquad t_{m}=\sum_{\alpha} t_{m, \alpha} w_{\alpha}.
\end{align*}
Moreover, $\pi_*$ is the $K$-theoretic pushforward along the projection 
\begin{align*}
 \pi_* : \left[ {\mathcal{Q}}^{\epsilon}_{g,n}(X, d )/S_n \right] \rightarrow \pt, 
\end{align*}
$\{\phi_\alpha \}$ is a basis of $K^0(X_\theta)\otimes \bQ$, and $t_{k,\alpha}$ are formal variables. The last term in (\ref{quasimap-invariants})  is the level $l$ determinant line bundle over $\mathcal{Q}^{\epsilon}_{g,n}(X_\theta, \beta)$, defined as
\begin{align*}
	\mathcal{D}^{R,l} :=\left( \mathrm{det}R^{\bullet}\pi_*(\mathcal{P} \times _G R ) \right)^{-l} , 
\end{align*}
where the bundle $\mathcal{P} \times _G R $ is the pullback of the vector bundle $[V \times R / G]  \rightarrow [ V/G ]  $ along the evaluation map to the quotient stack $[V/G]$.

Similarly, we can define the moduli space for graph space quasimaps $ \mathcal { QG }^{\epsilon} _ { 0 , n } ( X , d ) $,  which parametrizes quasimaps with \emph{parametrized} domain component $\mathbb{P}^1$. As a result, there is a natural $\mathbb{C}^*$-action, coming from the $\bC^*$-action that scales the parametrized domain component. Denote by $F_{0, d}$ the special fixed loci in $  \mathcal { QG }^{\epsilon} _ { 0 , n } ( X , d )^{\mathbb{C}^*}$, i.e., the open substack consisting of quasimaps $f$ such that $\infty \in \bP^1$ is not a base point, and denote by $q$ the $\bC ^*$-character of the cotangent bundle at $0 :=[1,0] $ of $\mathbb{P}^1$ (sometimes we denote by $\bC_q^*$, the same $\bC^*$ to emphasize the character). For details, see \cite{CKM}.
\begin{Definition} {\cite{RZ}}
The permutation-equivariant $K$-theoretic $\mathcal{J}^{R,l,\epsilon}$-function of $V /\!/ {G}$ with level $l$ is defined as
\begin{align*}
\mathcal { J } _ { S _ { \infty } } ^ { R , l , \epsilon } ( \mathbf { t } ( q ) , Q ) 
 &:= \sum_{k \geq 0, \, d \in {\operatorname { Eff } ( V , G , \theta )} } Q^d (\ev_{\bullet})_{*} \left[ \operatorname{Res}_{F_{0, d}}( \mathcal{QG} _ { 0 , n } ^ { \epsilon } ( V /\!/ G , d )_{0})^{\mathrm{vir}} \otimes \mathcal { D } ^ { R , l } \otimes _ { i = 1 } ^ { n } \mathbf { t } ( \mathbb{L} _ { i } ) \right]^{S_n} \\
 &:= 1 + \frac { \mathbf { t } ( q ) } { 1 - q } +\sum _ { a } \sum _ { d \neq 0 } Q ^d \chi \left( F_ { 0 , d } , \, \mathcal { O } _ { F_ { 0 , d} } ^ { \mathrm { vir } } \otimes \ev_{\bullet} ^ { * } ( \phi _ { a } ) \otimes \left( \frac { \operatorname { t r } _ { \mathbb { C } ^ { * } } \mathcal { D } ^ { R , l } } { \lambda_{-1}^{\mathbb{C}^*}  N _ { F_ { 0 , d } } ^ { \vee } } \right) \right) \phi ^ { a } \\
 & + \sum_a \sum _ { n \geq 1 \, \text{or} \,  d(L_\theta) \geq 1 / \epsilon \atop ( n , d ) \neq ( 1,0 ) } Q ^d \left\langle \frac { \phi _ { a } } { ( 1 - q ) ( 1 - q \mathbb{L}_{n+1} ) } , \mathbf { t } ( \mathbb{L}_1 ) , \cdots , \mathbf { t } ( \mathbb{L}_n ) \right\rangle _ { 0 , n + 1 , d } ^ { R , l , \epsilon , S _ { n } } \phi ^ { a } , 
\end{align*}
where $\ev_\bullet$ is the evaluation map at the point $\infty\in \bP^1$, $\{ \phi_\alpha \}$ is a basis of $K^0(X)$ and $\{ \phi^\alpha \}$ is the dual basis with respect to twisted pairing  $( \ \ , \ \ )^{R,l}$,  i.e., 
\begin{align*}
	(u,v)^{R,l}:=\chi\left( X,u \otimes v \otimes \mathrm{det}^{-l}(V^{ss} \times_{G} R )  \right) . 
\end{align*}
\end{Definition}
\begin{Definition}{\cite{RZ}}
When $\epsilon$ is small enough (denoted by $\epsilon=0^{+}$), we call $\mathcal{J}^{R,l,0^{+}}(0)$ the small $I$-function of level $l$, i.e,
\begin{align*}
{I}^{R,l}(q;Q):= \mathcal { J } _ { S _ { \infty } } ^ { R , l , 0^{+} } ( 0 , Q ) =  1 + \sum _ { d \geq 0 } Q^d (\ev_{\bullet})_{*} \left(  \mathcal { O } _ { F_ { 0 , d } } ^ { \mathrm { vir } } \otimes  \left( \frac { \operatorname { t r } _ { \mathbb { C } ^ { * } } \mathcal { D } ^ { R , l } } { \lambda_{-1}^{\mathbb{C}^*}  N _ { F_ { 0 , d } } ^ { \vee } } \right) \right) \cdot \mathrm{det}^l(V^{ss} \times_{G} R ) . 
\end{align*}
Note that here we take $(g,n) = (0,0)$. 
\end{Definition}
In the rest of this section, we will compute the explicit formula for the contribution of fixed points to the $I$-function.

\subsection{Quasimaps to fixed points and their $I$-functions}
From now on, let $V = \bC^n$, and let $G = \sK \cong (\bC^*)^k$ act on $V$ by the charge matrix $\iota$. Consider the GIT quotient $X_\theta = \bC^n/\!/_\theta \sK $ with respect to a character $ \theta \in \Lie_{\bR}(\sK^\vee) $, there is a natural $\sT$--action on $X_\theta$, which induces an action on $\mathcal{Q}_{0,0}(X_\theta,d)$, then
\begin{align*}
\mathcal{Q}_{0,0}(X_\theta,d)^{\sT} = \bigsqcup_{\bp \in X_{\theta}^{\sT}} \mathcal{Q}_{0,0}(\bp,d)	, 
\end{align*}
where $\mathcal{Q}_{0,0} (\bp, d)$ be the moduli space of quasimaps from $\bP^1$ to $\bp$ of degree $d$.

Recall that $u_i$ and $s_j$, for $1\leq i\leq n$, $1\leq j\leq k$, are the characters associated with the basis of $\bC^n$ and $\bC^k$ respectively. Let $(-)|_{X_\theta}$ be the restriction map $K_\sT(\fX) \to K_\sT (X_\theta)$ induced by the open inclusion $X_\theta \hookrightarrow \fX$. Denote by 
$$
U_i := u_i |_{X_\theta},  \qquad L_j := s_j |_{X_\theta} 
$$
the tautological line bundles on $X_\theta$ associated with the standard characters. By abuse of notations, we often use the same letters $U_i$ and $L_j$'s for line bundles pulled back to $\bP^1$, i.e. $f^*u_i$ and $f^* s_j$. 

An alternatively description of a quasimap is the datum consisting of $k$ line bundles $\{ L_j \}_{j=1}^{k}$ (which are identified with $f^* s_j$'s), and a certain section of the associated vector bundle $\bigoplus_{i=1}^n U_i$ (which is identified with $\bigoplus_{i=1}^n f^* u_i$). 
\begin{Definition} \label{total-stack-I}
The \emph{modified $I$-function} is defined as 
$$
\widetilde I^{R,l} (q,Q):= e^{-\sum_{i=1}^n\frac{\ln z_i \ln U_i}{\ln q}} \cdot \frac{ {\lambda_{-1} (T^* X_{\theta})} }{(q \cdot T X_\theta )_\infty} \cdot I^{R,l}(q,Q), 
$$
where $(-)_\infty$ and $\lambda_{-1} (-)$ are characteristic classes defined as
$$
(E)_\infty := \prod_i (\cL_i)_\infty, \quad \lambda_{-1} (E)=\prod_{i}(1- \cL_i)
$$
if a vector bundle $E$ splits into line bundles $E = \bigoplus_i \cL_i$. Recall that $z_i$'s are the redundant K\"ahler parameters. 
\end{Definition}

\begin{Remark}
The exponential prefactor $e^{-\sum_{i=1}^n\frac{\ln z_i \ln U_i}{\ln q}}$ is a $q$-analogue of the exponential factor $e^{\sum_i \frac{t_i H_i}{z}}$ as in the cohomological $I$-functions. The factor $\dfrac{1}{(q \cdot TX_\theta )_\infty}$ are $q$-analogues of the Gamma class, which lies in a  \emph{completion} of the $K$-group.  
\end{Remark}

The \emph{degree} of a quasimap is defined as $d = (d_1, \cdots, d_k)\in \bZ^k$, where $d_j = \deg L_j $, for $1\leq j\leq k$. One can also describe it by $D = (D_1, \cdots, D_n) \in \bZ^n$, where $D_i = \deg U_j$, for $1\leq i\leq n$. The relation between them, by (\ref{u-s}), is
$$
D_i = \sum_{j=1}^k \iota_{ij} d_j, \qquad \text{or} \qquad D = \iota d. 
$$
A point in $\bP^1$ is called a base point if it is not mapped to $\bp$ under $f$. 

Let $\mathcal{Q}_{0,0} (\bp, d)^\circ$ be the open substack consisting of quasimaps $f$ such that $\infty \in \bP^1$ is not a base point. We have the following diagram
$$
\xymatrix{
\mathcal{Q}_{0,0} (\bp, d)^\circ \ar@{^{(}->}[r] & \mathcal{Q}_{0,0} (\bp, d) \ar@{^{(}->}[r] & \mathcal{Q}_{0,0} (X_\theta, d), 
}
$$
where the second inclusion is a closed embedding. There is a perfect obstruction theory on $\mathcal{Q}_{0,0} (X_\theta, d)^\circ $, (whose dual is) given by 
$$
R\pi_* f^* T X_\theta, 
$$
where $\pi: \bP^1 \times \Hom (\bP^1, X_\theta) \to \Hom(\bP^1, X_\theta)$ is the universal curve, and $ f: \bP^1 \times \Hom (\bP^1, X_\theta) \to X_\theta $ is the universal morphism. Let $T_\vir (\bp)$ be the pull-back of $R\pi_* f^* T X_\theta$ to $\mathcal{Q}_{0,0} (\bp, d)^\circ$, which is (the dual of) a perfect obstruction theory on $\mathcal{Q}_{0,0} (\bp, d)^\circ$. 

Let $\cO_\vir (\bp)$ be the virtual structure sheaf \cite{Lee} associated with the obstruction theory $T_\vir (\bp)$. There is an evaluation map $\ev_\infty: \mathcal{Q}_{0,0} (\bp, d)^\circ \to X_\theta$, which is not proper. From the following commutative diagram
$$
\xymatrix{
\mathcal{Q}_{0,0} (\bp, d)^\circ \ar[d]^-{\ev_\infty} \ar@{^{(}->}[r]  & \mathcal{Q}_{0,0} (X_\theta, d)^\circ  \ar[d]^-{\ev_\infty}  \\
 \bp  \ar@{^{(}->}[r]     &  X_\theta , 
}	
$$
by using Atiyah-Bott localization formula, we obtain
\begin{align*}
I^{R,l}(q,Q) =  \sum_{d\in \Eff (X_\theta)} \sum_{\bp \in X_{\theta}^{\sT}} Q^d (\ev_\infty)_* \left( \cO_\vir (\bp) \otimes  \operatorname { t r } _ { \mathbb { C } ^ { * } } \mathcal { D } ^ { R , l }  \right)\cdot \frac{ \mathrm{det}^l(\bC^{n,s} \times_{\sK} R )}{\lambda^{\sT}_{-1} (T_\bp^* X_{\theta} )} . 
\end{align*}
It is a formal power series lying in 
$$
K_{\sT \times \bC^*_q} (\bp)_\loc \llbracket Q^{\Eff(\bp)} \rrbracket , 
$$
where ``loc" means to tensor with the fractional field, $Q_j$ with $1\leq j\leq k$ are the effective K\"ahler parameters, and $Q^d = \prod_{j=1}^k Q_j^{d_j}$.

\begin{Remark} \label{effective-Kahler}
By $D = \iota d$, the relation between effective and redundant parameters is $Q_j = \prod_{i=1}^n z_i^{\iota_{ij}}$. It then follows that $Q^d = z^D = \prod_{i=1}^n z_i^{D_i}$. Later we will use redundant parameters $z_i$ more often. 
\end{Remark}

We claim the contribution from each fixed point $\bp$ to the $I$-function is 
\begin{align} \label{I-of-fixed-point}
I^{R,l}(\bp,\theta)	:=\sum_{d\in \Eff (\bp)} Q^d (\ev_\infty)_* \left( \cO_\vir (\bp) \otimes  \operatorname { t r } _ { \mathbb { C } ^ { * } } \mathcal { D } ^ { R , l }  \right)\cdot \frac{ \mathrm{det}^l(\bC^{n,s} \times_{\sK} R )}{\lambda^{\sT}_{-1}(T_\bp^* X_{\theta} )} . 
\end{align}
Note that the summation is over $\Eff(\bp)$, which is a subcone of $\Eff(X_\theta)$. This is based on the following observation. 

\begin{Lemma} \label{key-lemma}
	Let $f$ be a quasimap from $\bP^1$ to $\bp$. Then
	\begin{enumerate}[1)]
		
		\item $D_i \geq 0$, for all $i\in \bp$; 
		
		\item the vector $d = (d_1, \cdots, d_k)$ lies in the effective cone $\Eff(\bp)$. 
		
	\end{enumerate}
\end{Lemma}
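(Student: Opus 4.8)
The plan is to analyze a quasimap $f$ to the point $\bp$ directly in terms of the line bundles $U_i = f^*u_i$ and their degrees $D_i$. Since $\bp$ is the closed substack $\{x_i \neq 0, i\in\bp;\ x_i=0, i\notin\bp\}\cong[(\bC^*)^k/\sK]$, a quasimap $f\colon\bP^1\to\bp$ is the same datum as $k$ line bundles $L_j$ on $\bP^1$ together with sections of the bundles $U_i=f^*u_i = L_1^{\iota_{i1}}\otimes\cdots\otimes L_k^{\iota_{ik}}$ for $i\in\bp$, and these sections are required to be nowhere vanishing on $\bP^1$ (away from finitely many base points, but for the factor $\bp$ itself the condition ``maps into $\bp$'' forces the coordinates $x_i$ with $i\in\bp$ to be invertible everywhere). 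First I would observe that a nowhere-vanishing section of a line bundle on $\bP^1$ forces that line bundle to be trivial, hence $\deg U_i = D_i = 0$ — wait, more carefully: the section of $U_i$ may have zeros only at base points, which in the $\epsilon=0^+$ quasimap setting accumulate as effective divisors, so the correct statement is that $U_i$ admits a section, giving $D_i = \deg U_i \geq 0$ for all $i\in\bp$. This is part 1).

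For part 2), recall from the earlier lemma that $\Eff(\bp) = \cK(\bp)^\vee$, where $\cK(\bp)\subset\bR^k$ is the interior of the cone generated by the rows $\iota_i = (\iota_{i1},\dots,\iota_{ik})$ for $i\in\bp$. By definition of dual cone, $d\in\overline{\Eff(\bp)}$ iff $\langle\theta,d\rangle\geq 0$ for every $\theta\in\cK(\bp)$, and since $\cK(\bp)$ is generated by the $\iota_i$, $i\in\bp$, this is equivalent to $\langle\iota_i,d\rangle\geq 0$ for all $i\in\bp$. But $\langle\iota_i,d\rangle = \sum_{j=1}^k\iota_{ij}d_j = D_i$ by the relation $D=\iota d$. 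So part 2) is literally the same inequality as part 1), repackaged: $D_i\geq 0$ for all $i\in\bp$ says exactly that $d$ pairs non-negatively with the generators of $\cK(\bp)$, i.e. $d\in\overline{\Eff(\bp)}$. I would just need to note whether the conclusion is $d\in\Eff(\bp)$ (open) or its closure — since $\Eff(\bp)$ as used in the summation should be taken as the cone of lattice points, i.e. $\overline{\Eff(\bp)}\cap\bZ^k$, this matches.

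The main obstacle, and the only place requiring genuine care, is step 1): justifying $D_i\geq 0$ for $i\in\bp$ for an arbitrary $\epsilon$-stable (in particular $\epsilon=0^+$) quasimap rather than an honest map. One must argue that the defining section $s$, restricted to the component mapping to $\bp$, gives a section of $f^*u_i$ over $\bP^1$ for each $i\in\bp$, and that the base locus contributes non-negatively to the degree. Concretely: a quasimap of degree $d$ consists of line bundles $L_j$ with $\deg L_j = d_j$ and a section $s\in H^0(\bP^1,\bigoplus_i f^*u_i)$ such that $s$ lands in the stable locus away from a finite set; the $i$-th component $s_i\in H^0(\bP^1, f^*u_i)$ is, for $i\in\bp$, not identically zero (otherwise the image would not meet $\bp$), so $f^*u_i$ has a nonzero global section, forcing $D_i = \deg f^*u_i\geq 0$. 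I would also double-check the degree bookkeeping $D_i=\sum_j\iota_{ij}d_j$ against the convention $d\in\Hom(\Pic^G(V),\bZ)$ used in the quasimap definition, but this is exactly the relation already recorded as $D=\iota d$, so no new work is needed. I expect the whole proof to be about half a page.
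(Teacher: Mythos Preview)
Your proposal is correct and follows essentially the same argument as the paper's proof: for $i\in\bp$ the quasimap section of $U_i$ is generically nonzero, hence $D_i\geq 0$; and since $D_i=\sum_j\iota_{ij}d_j$, these inequalities say exactly that $d$ lies in the dual cone of $\cK(\bp)$, i.e.\ in $\Eff(\bp)$. Your version is more detailed about base points and the open/closed cone distinction, but the core reasoning is identical.
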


\begin{proof}
	Since $f$ generically maps into $\bp$, we know that for $i\in \bp$, the section of the line bundle $U_i$ defined by $f$ is generically nonzero. Therefore $D_i \geq 0$ for all $i\in \bp$. In other words, $\sum_{j=1}^k \iota_{ij} d_j \geq 0$ for all $i\in \bp$, which means that $d$ lies in the dual of $\cK (\bp)$. 
\end{proof}

\begin{Corollary} \label{independence-I-bp}
The contribution of the $I$-function $I^{R, l} (\bp, \theta)$ is independent of the choice of the stability condition $\theta$. 
\end{Corollary}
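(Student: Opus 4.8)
The plan is to show that the summand in \eqref{I-of-fixed-point} — the degree-$d$ fixed-point contribution — is intrinsic to the quasimap geometry at $\bp$, and does not reference the ambient GIT quotient $X_\theta$ except through data that is canonically attached to $\bp$. First I would observe that by Lemma \ref{key-lemma}, for a quasimap $f$ from $\bP^1$ to $\bp$ the degree vector $d$ necessarily lies in $\Eff(\bp)$, independently of $\theta$; hence the range of summation in $I^{R,l}(\bp,\theta)$ is the fixed cone $\Eff(\bp)$ regardless of which chamber $\theta$ sits in. This already handles the index set of the sum, so it remains to check that each individual term is chamber-independent.

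Next I would go term by term through the localization formula. The moduli space $\mathcal{Q}_{0,0}(\bp,d)^\circ$ of quasimaps with non-base-point at $\infty$, together with its perfect obstruction theory, is defined purely in terms of $\bp$, the line bundles $\{L_j\}$ and their sections — it makes no reference to the stability condition beyond the fact that $\bp$ is a fixed point of $\fX$. Indeed, $\mathcal{Q}_{0,0}(\bp,d)$ is the same stack for every $\theta\in\cK(\bp)$: it is a moduli of maps to the point-stack $[(\bC^*)^k/\sK]\cong\pt$, presented via $k$ degree-$d_j$ line bundles on $\bP^1$ with prescribed sections. The virtual structure sheaf $\cO_\vir(\bp)$ is then attached to $T_\vir(\bp)$, which I would verify equals the restriction of $R\pi_* f^* T\fX$ (rather than $TX_\theta$) — the tangent complex of the quotient stack $\fX$ restricted to $\bp$ is chamber-independent, being $\bigoplus_{i=1}^n U_i|_\bp \ominus \Lie\sK$ with $U_i|_\bp$ determined by \eqref{solution}, which as noted at the end of Section \ref{parameters} involves only the effective equivariant parameters and not $\theta$. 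Likewise $\ev_\infty:\mathcal{Q}_{0,0}(\bp,d)^\circ\to\bp$ factors through the substack $\bp$ itself, so the pushforward $(\ev_\infty)_*$ and the Euler-class denominator $\lambda_{-1}^\sT(T_\bp^* X_\theta)$ — which equals $\lambda_{-1}^\sT(T_\bp^*\fX)=\prod_{i\not\in\bp}(1-U_i^{-1}|_\bp)$, again chamber-free — depend only on $\bp$.

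Finally I would address the level structure factors, $\operatorname{tr}_{\bC^*}\cD^{R,l}$ and $\det^l(\bC^{n,s}\times_\sK R)$. The determinant line bundle $\cD^{R,l}=(\det R^\bullet\pi_*(\cP\times_G R))^{-l}$ is built from the principal $\sK$-bundle $\cP$ on $\bP^1$ underlying the quasimap $f$ and the representation $R$; since $\cP$ is determined by the $k$ line bundles $L_j$ on $\bP^1$, which live on $\mathcal{Q}_{0,0}(\bp,d)$, this is independent of $\theta$. The normalization factor $\det^l(\bC^{n,s}\times_\sK R)$ restricted to $\bp$ becomes $\det^l(R\otimes(\text{characters }u_i|_\bp))$, a function of $u_i|_\bp$ alone. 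Assembling these observations, every factor in the $d$-th term of \eqref{I-of-fixed-point} is a chamber-independent element of $K_{\sT\times\bC_q^*}(\bp)_\loc$, and the sum is over the fixed cone $\Eff(\bp)$; hence $I^{R,l}(\bp,\theta)$ does not depend on $\theta$, and we may write $I^{R,l}(\bp)$.

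The step I expect to be the main obstacle is the careful identification of the obstruction theory: one must check that $T_\vir(\bp)$, nominally defined by pulling back $R\pi_* f^* TX_\theta$, genuinely coincides with the $\theta$-independent complex $R\pi_* f^*T\fX|_\bp$, i.e. that the difference between $TX_\theta$ and $T\fX$ along $\bp$ (a shift by the moment-map directions cut out by $\theta$) contributes trivially after restriction to the fixed locus. This is where one uses that $\bp$ is a \emph{closed} substack isomorphic to $\pt$ sitting inside every $X_\theta$ with $\theta\in\cK(\bp)$, so the normal directions to $\bp$ are the same $\{U_i|_\bp\}_{i\not\in\bp}$ in every chamber; once this is in hand, the rest is bookkeeping.
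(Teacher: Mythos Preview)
Your proposal is correct and follows essentially the same approach as the paper: both argue that every ingredient in the localization formula---the moduli $\mathcal{Q}_{0,0}(\bp,d)^\circ$, the obstruction theory $T_\vir(\bp)$, the tangent space $T_\bp X_\theta$, the level-structure factors, and the summation range $\Eff(\bp)$---can be defined intrinsically via the embedding $\bp\hookrightarrow\fX$, without reference to any particular GIT quotient. Your anticipated ``main obstacle'' is in fact a non-issue, since $X_\theta\hookrightarrow\fX$ is an \emph{open} embedding and hence $TX_\theta = T\fX|_{X_\theta}$ on the nose---there are no extra moment-map directions to account for, and this is exactly how the paper dispatches the obstruction-theory identification in one line.
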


\begin{proof}
Every step in the localization computation can be performed on the quotient stack $\fX$, instead of the GIT quotient $X_\theta$. More precisely, one has the embedding
$$
\xymatrix{
	\mathcal{Q}_{0,0} (\bp, d)^\circ \ar@{^{(}->}[r] & \mathcal{Q}_{0,0} (\bp, d) \ar@{^{(}->}[r] & \mathcal{Q}_{0,0} (X_\theta, d) \ar@{^{(}->}[r] & \Hom (\bP^1, \fX), 
}
$$
where $\Hom (\bP^1, \fX)$ is the Artin stack parametrizing representable morphisms from $\bP^1$ to $\fX$. The obstruction theory on $\mathcal{Q}_{0,0} (X_\theta, d)$ is the restriction from an obstruction theory 
$$
T_\vir := R\pi_* f^* T \fX,
$$
where $T \fX$ is the tangent complex of the stack $\fX$, and $\pi$, $f$ are similar as above. One can then define the obstruction theory $T_\vir (\bp)$ alternatively, as the restriction of $T_\vir$ to $\mathcal{Q}_{0,0} (\bp, d)^\circ$. Replacing $T_p^* X_\theta$ by the same space $T_\bp \fX$, we see that every factor in the formula \ref{I-of-fixed-point} can be defined directly from the embedding $\bp \hookrightarrow \fX$, without a choice of $X_\theta$. Recalling that the effective cone $\Eff(\bp)$ also does not depend on $\theta$, the corollary follows. 
\end{proof}

\begin{Definition}
According to Corollary \ref{independence-I-bp}, we denote from now on 
$$
I^{R, l} (\bp) := I^{R, l} (\bp, \theta), 
$$
for any $\theta$ such that $\bp \in X_\theta$. The $I$-function with level structure for the toric stack is then defined as 
$$
I^{R,l} (\fX) :=  \sum_{\bp \in \fX^{\sT} } I^{R,l}({\bp}) \quad \in \quad \bigoplus_{\bp\in \fX^\sT} K_{\sT \times \bC^*_q} (\bp)_\loc \llbracket Q^{\Eff(\bp)} \rrbracket .		
$$
Similarly, the modified $I$-function with levels structure is defined as as 
$$
\widetilde{I}^{R,l} (\fX) :=  \sum_{\bp \in \fX^{\sT} } \widetilde{I}^{R,l}({\bp}).		
$$
\end{Definition}

\subsection{Explicit formula for $I$-functions}

Let $\bp\in \fX_\theta \subset \fX$ be a $\sT$-fixed point. Recall that the restriction of the line bundle $U_i |_\bp$ is the character $u_i |_\bp$, given as in (\ref{solution}). 

The virtual tangent bundle, restricted to a fixed quasimap $f$, is
\ben
T^\vir |_f &=& H^\bullet (\bP^1, U_1 |_\bp \oplus \cdots \oplus U_n |_\bp ) - H^\bullet (\bP^1, \cO^{\oplus k} ) \\
&=& \sum_{i=1}^n H^\bullet (\bP^1, U_i |_\bp \otimes \cO (D_i) ) - k \\
&=& \sum_{i\in \bp} ( H^\bullet (\bP^1, \cO (D_i) ) -1) + \sum_{i\not\in \bp} H^\bullet (\bP^1, U_i |_\bp \otimes \cO (D_i) ) . 
\een
Recall that we have
\ben
H^\bullet (\bP^1, \cO (d)) &=& \left\{ \begin{aligned}
	& 1 + q^{-1} + \cdots + q^{-d} , && \qquad d\geq 0 \\
	& - q - \cdots - q^{-d-1}, && \qquad d<0. 
\end{aligned} \right. \\
&=& \sum_{l=0}^\infty q^{-l} - \sum_{l=d+1}^\infty q^{-l} . 
\een
Since \footnote{We write $(x)_\infty := (x; q)_\infty$. When we need symboles such as $(x; q^{-1})_\infty$ or $(x; q^2)_\infty$, we do not omit the $q^{-1}$ or $q^2$. }
$$
\lambda_{-1} [U \cdot (H^\bullet (\bP^1, \cO(d)) -1) ]^\vee = \frac{\prod_{l=-\infty}^{d} (1-U^{-1} q^l)}{\prod_{l=-\infty}^{0} (1-U^{-1} q^l) }  =: (q U^{-1})_d , 
$$
the $I$-function is
\ben
I (\bp) &:=& \sum_{d \in \Eff(\bp)} \frac{1}{\lambda_{-1} (T^\vir |_f )^\vee } \\
&=& \frac{1}{\prod_{i \notin \bp}(1-U^{-1}_i|_{\bp})} \cdot \sum_{d\in \Eff (\bp)} \frac{Q^d}{\prod_{i=1}^n (q U_i^{-1} |_\bp )_{D_i} } .  
\een

\begin{Remark} \label{sum-over-lattice}
The summation over $d\in \Eff (\bp)$ here can actually be replaced with summation over the entire lattice $d\in \bZ^k$. The reason is that one always have
$$
\frac{1}{(qU_i^{-1}|_{\bp})_d}=\frac{1}{(q)_d} = 0, \qquad \text{if} \ d<0 \quad \text{and} \quad i \in \bp. 
$$
\end{Remark}

\begin{Remark}
The $I$-function is invariant under the $GL(k, \bZ)$-action on $\bZ^k$ or the $GL(d, \bZ)$-action on $\bZ^d$, i.e. the matrices $\iota$ (resp. $\beta$) we start with can be replaced up to a right multiplication by a matrix in $GL(k, \bZ)$ (resp. left multiplication by a matrix in $GL(d, \bZ)$). Under such a change of basis, the redundant parameters $z_i$ are unchanged, while the effective parameters $Q_j$ will be changed accordingly. 
\end{Remark}

\subsection{Effective level and modified $I$-function}
In Section \ref{sec-4}, we will use the following special representation in the mirror symmetry construction. Let
\begin{align}
R = \operatorname{Hom}(\bC,\bC^n) \simeq \bC^n, 
\end{align}
and the $\sK=(\bC^*)^k$-action be given by the matrix $\iota$ in the exact sequence (\ref{knd}). 

We choose the level $l = 1$. Then
\begin{align*}
\tr_{\bC^*} \cD^{\bC^n,1} &= tr_{\bC^*} \left( \operatorname{det}^{-1}R^{\bullet} \left( \oplus_{i=1}^n U_i \otimes \cO_{\bP^1}(D_i)  \right)   	\right) \\
&= \bigotimes_{i=1}^n \left(U_i^{-D_i+1}q^{D_i(D_i+1)/2}  \right) , 
\end{align*}
and 
\begin{align*}
	\operatorname{det}\left( \bC^{n,s} \times_{\sK} R  \right) = \bigotimes_{i=1}^{n} U_i . 
\end{align*}
Thus
\begin{align} \label{I-function-with-special-level}
I^{\bC^n,1}(\bp)	&= \frac{1}{\prod_{i \notin \bp}(1-U^{-1}_i|_{\bp}) } \cdot \sum_{d\in \Eff (\bp)} \frac{\prod_{i=1}^{n}\left( U_i^{-1}|_{\bp}q^{D_i(D_i+1)/2}   \right)  Q^d}{ \prod_{i=1}^n (q U_i^{-1} |_\bp  )_{D_i} }	  \\
&= \prod_{i \notin \bp}(1-U^{-1}_i|_{\bp}) \cdot \sum_{d\in \Eff (\bp)} \frac{\prod_{i=1}^{n}\left( U_i^{-1}|_{\bp}q^{D^2_i/2} \cdot    (q^{1/2}z_i)^{D_i}  \right) }{\prod_{i=1}^n (q U_i^{-1} |_\bp ; q )_{D_i} }  . 
\end{align}
Here we use the relation between effective parameters $Q_i$'s and redundant parameters $z_i$'s, see Remark \ref{effective-Kahler}.
\begin{Remark}
If we write out the expression of 
\begin{align*}
\frac{1}{2}\sum_{j=1}^n D_j^2 &=\frac{1}{2} \sum_{j=1}^n \Big(\sum_{i=1}^{k} \iota_{ij}\cdot d_i \Big)^2 \\
&= \frac{1}{2} \sum_{j=1}^n  \sum_{a,b=1}^k \iota_{aj} \iota_{bj} \cdot d_a d_b 
\end{align*}
{the term $\frac{1}{2} \sum_{a,b=1}^k \iota_{aj}\iota_{bj} $ appears in physics literatures as effective Chern-Simons term, thus we define this special level as follows:}
\end{Remark}

\begin{Definition} \label{effective-level}
We call the number $\frac{1}{2}\sum_{a,b=1}^k \iota_{a,j}\iota_{bj}, j=1,\cdots,n$ the \emph{effective levels}, and we call (\ref{I-function-with-special-level}) the $I$-function with \emph{effective levels}, denoted by $I^{\eff}(\bp)$. 
\end{Definition}

From the above computations, the \emph{modified $I$-function with effective level}  $\widetilde{I}^{\eff}(\bp)$ is 
$$
\widetilde{I}^{\eff} (\bp) := e^{-\sum_{i\not\in \bp} \frac{\ln z_i \ln U_i |_\bp }{\ln q} } \cdot  \prod_{i\not\in \bp} \frac{1-U_i^{-1}|_{\bp}}{ (U_i |_\bp )_\infty } \cdot  I^{\eff}  (\bp). 
$$
Note that the $Q$-coefficients of $\widetilde I (\bp)$ no longer lie in $K_{\sT \times \bC_q^*} (\bp)_\loc$.

\vspace{1cm}

\section{3d $\cN = 2$ mirror symmetry} \label{mirror} \label{sec-4}
Recall the toric stack $\fX$ is defined according to the following short exact sequence 
\begin{equation} \label{exact-sequnce-1} 
\xymatrix{
	0 \ar[r] & \bZ^k \ar[r]^-\iota & \bZ^n \ar[r]^-\beta & \bZ^d \ar[r] & 0. 
} 
\end{equation}
Inspired by \cite{DT, AHKT}, we consider the dual short exact sequence, i.e., the Gale dual to (\ref{exact-sequnce-1}):
\begin{equation} \label{exact-sequnce-2} 
\xymatrix{
	0 \ar[r] & (\bZ^d)^\vee \ar[r]^-{\iota^!} & (\bZ^n)^\vee \ar[r]^-{\beta^!} & (\bZ^k)^\vee \ar[r] & 0,
} 
\end{equation}
where 
$$
\iota^! := \beta^T, \qquad \beta^! := \iota^T. 
$$

\begin{Definition} \label{defn-mirror}
	
Let $\sA := \sT / \sK \cong (\bC^*)^d$ be the quotient torus. 

\begin{itemize}
		
		\setlength{\parskip}{1ex}
				
\item The \emph{mirror toric stack} to $\fX$ is defined as
$$
\fX^! := [\bC^n / \sA^\vee], 
$$
the toric quotient stack associated with the short exact sequence (\ref{exact-sequnce-2}), where the action by $\sA^\vee$ is defined by $\iota^!$. 

\item There is a natural bijection between fixed points of a mirror pair. Given a fixed point $\bp$ of $\fX$, the \emph{mirror fixed point} is defined as the complement
$$
\bp^! = \{1, \cdots, n \} \backslash \bp \quad \in \quad (\fX^!)^\sT . 
$$
	
\end{itemize}

\end{Definition}

It is easy to check that the columns of $\beta$, i.e. the rows of $\iota^!$, corresponding to $\bp^!$, are linearly independent. 

\begin{Lemma}
	The K\"ahler and attracting cones of $\bp^!$ are
	$$
	\cK (\bp^!) = \cA(\bp) , \qquad \cA (\bp^!) = \cK (\bp). 
	$$
\end{Lemma}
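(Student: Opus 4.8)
The statement to prove is that for a mirror pair, $\cK(\bp^!) = \cA(\bp)$ and $\cA(\bp^!) = \cK(\bp)$. The plan is to translate both sides into the combinatorial descriptions already established in Section~\ref{sec-2}, and observe that the Gale-dual bookkeeping matches them up. Recall from the lemma on K\"ahler cones that $\cK(\bp)$ is the interior of the cone in $\bR^k$ generated by the rows $\{\iota_i\}_{i\in\bp}$ of $\iota$, and from Lemma~\ref{attracting-cone}(3) that $\cA(\bp)$ is the interior of the cone in $\Lie_\bR(\sA)\cong \bR^d$ generated by the columns $\{\beta_i\}_{i\notin\bp}$ of $\beta$. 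Applying these same descriptions to the mirror data $(\iota^!,\beta^!)=(\beta^T,\iota^T)$ and the mirror fixed point $\bp^!=\{1,\dots,n\}\setminus\bp$: the K\"ahler cone $\cK(\bp^!)$ is the interior of the cone generated by the rows of $\iota^!=\beta^T$ indexed by $\bp^!$, i.e.\ by the columns of $\beta$ indexed by $i\notin\bp$; and the attracting cone $\cA(\bp^!)$ is the interior of the cone generated by the columns of $\beta^!=\iota^T$ indexed by $i\notin\bp^!=\bp$, i.e.\ by the rows of $\iota$ indexed by $i\in\bp$.

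First I would set up the identifications of the ambient lattices: for $\fX$ the K\"ahler side lives in $\Lie_\bR(\sK^\vee)\cong\bR^k$ and the equivariant/attracting side in $\Lie_\bR(\sA)\cong\bR^d$, while for $\fX^!=[\bC^n/\sA^\vee]$ the roles swap, since the new ``$\sK$'' is $\sA^\vee$ (so its character Lie algebra is $\bR^d$) and the new quotient torus is $\sK^\vee$ with $\Lie_\bR\cong\bR^k$. One must check these identifications are the natural ones coming from the Gale-dual exact sequence (\ref{exact-sequnce-2}) and are compatible with the duality $\cK(\bp)^\vee=\Eff(\bp)$, but this is exactly the combinatorial setup extended from \cite{SZ} referenced in Section~\ref{sec-2}. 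Then the proof is simply:
\begin{align*}
\cK(\bp^!) &= \operatorname{int}\operatorname{Cone}\{(\iota^!)_i : i\in\bp^!\} = \operatorname{int}\operatorname{Cone}\{\beta_i : i\notin\bp\} = \cA(\bp),\\
\cA(\bp^!) &= \operatorname{int}\operatorname{Cone}\{(\beta^!)_i : i\notin\bp^!\} = \operatorname{int}\operatorname{Cone}\{\iota_i : i\in\bp\} = \cK(\bp),
\end{align*}
where the first equality in each line is the relevant combinatorial lemma applied to the mirror data, the middle equality is $\iota^!=\beta^T$, $\beta^!=\iota^T$ together with $\bp^!=\{1,\dots,n\}\setminus\bp$, and the last is the corresponding lemma for $\fX$ itself.

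The only genuine content — and the step I expect to be the main obstacle — is verifying that the lattice identifications are consistent: that the copy of $\bR^k$ (resp.\ $\bR^d$) in which $\cK(\bp^!)$ (resp.\ $\cA(\bp^!)$) naturally lives is canonically the same as the copy in which $\cA(\bp)$ (resp.\ $\cK(\bp)$) lives, including getting the signs/orientations right so that ``interior of the cone generated by the columns of $\beta$ with $i\notin\bp$'' literally coincides as a subset, not merely up to an isomorphism. This requires being careful about how the duality functor $(-)^\vee$ acts on the two exact sequences and on $\Lie_\bR$, and about the fact that $\cA(\bp)$ was defined via cocharacters of $\sA=\sT/\sK$ whereas $\cK(\bp^!)$ is defined via characters of $\sA^\vee$ — these are canonically dual, and $\Lie_\bR(\sA)\cong\Lie_\bR(\sA^\vee{}^\vee)$, so the identification is natural, but it should be spelled out. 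Once that bookkeeping is pinned down, the lemma follows immediately with no further computation; I would also remark that it is consistent with the duality $\cK=\Eff^\vee$ and the relation $\Eff(\bp^!)=\cK(\bp)^\vee$ observed implicitly, and cross-check against Examples~\ref{Bl(P^2)} and \ref{mirror-of-Bl(P^2)}, where one sees concretely that the chambers/cones of one example are the attracting/K\"ahler cones of the mirror.
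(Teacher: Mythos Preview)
Your proposal is correct and is exactly the paper's approach: the paper's proof is a one-line appeal to the combinatorial descriptions of $\cK(\bp)$ (the unlabeled lemma following Definition~\ref{Kahler-cone}) and $\cA(\bp)$ (Lemma~\ref{attracting-cone}(3)), and you have simply written out how those descriptions match under $\iota^!=\beta^T$, $\beta^!=\iota^T$, and $\bp^!=\{1,\dots,n\}\setminus\bp$. Your discussion of the ambient lattice identifications goes beyond what the paper spells out, but it is correct and does not constitute a different argument.
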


\begin{proof}
This follows from the combinatorial descriptions (Lemma \ref{key-lemma} and Lemma \ref{attracting-cone} 3)) of the K\"ahler and attracting cones of fixed points. 
\end{proof}

\begin{Definition} \label{mirror-map}
The \emph{mirror map} is defined as an isomorphism of tori $\tau: \sT \times \sT^\vee \times \bC_q^*  \cong \sT^\vee \times \sT \times \bC_q^*$, 
\begin{equation} \label{tau}
\tau (z_i^!) = a_i, \qquad \tau (a_i^!) = z_i, \qquad \tau (q) = q^{-1}. 
\end{equation}
\end{Definition}

We would like to apply the mirror map $\tau$ to the modified $I$-functions $\widetilde I(\bp)$. However, the map $q\mapsto q^{-1}$ only makes sense for rational functions in $q$.  For functions such as $(u_i^{-1} |_\bp)_\infty$, which converges for $|q|<1$, the operation $q\mapsto q^{-1}$ will result in a function which converges for $|q|>1$. Therefore, it is necessary to understand the meaning of $\widetilde I(\bp)$ under the mirror map. 

We are now going to regard the modified $I$-function with effective level $ \widetilde I^{\eff} (\bp) $ also as a formal power series with respect to equivariant parameters. More precisely, we treat it as in the following larger space of functions:
\begin{equation} \label{double-series}
\widetilde I^{\eff} (\bp) \quad \in \quad e^{-\sum_{i\not\in \bp} \frac{\ln z_i \ln U_i |_\bp }{\ln q} } \cdot \bC(q) \llbracket a^{ \cA(\bp)^\vee},  Q^{\Eff(\bp)} \rrbracket , 
\end{equation}
where $a^{\cA(\bp)^\vee}$ here means the power series consists of monomials of the form $\prod_{i=1}^n a_i^{l_i}$, which are themselves \emph{effective} equivariant parameters, such that the vector $(l_1, \cdots, l_n)$ lies in the image of  $\cA(\bp)^\vee$ under the map $\beta^T$. Moreover, according to the combinatorial description of $\cA(\bp)$ in Lemma \ref{attracting-cone}, we see that
$$
U_i |_\bp \ \in \ \bC \llbracket a^{\cA(\bp)^\vee} \rrbracket, \qquad i\not\in \bp. 
$$
So the embedding (\ref{double-series}) in fact simply means to expand functions such as $1 / (U_i |_\bp)$ and $1 / (q^{-1} U_i |_\bp; q^{-1})_{D_i}$ for $i\not\in \bp$ as formal power series in $U_i |_\bp$, $i\not\in \bp$. 

\begin{Example} \label{example-for-tau} 
We look at a simplest example to see what the above means. The function $1 / (x)_\infty$, can be expanded as a power series in $x$ by the $q$-binomial formula:
$$
\frac{1}{(x)_\infty} = \sum_{d=0}^\infty \frac{x^d}{(q)_d} \quad \in \quad \bC(q) \llbracket x \rrbracket. 
$$
Assume that the mirror map $\tau$ acts trivially on $x$. Now by our definition, the operation $q\mapsto q^{-1}$ applies to the coefficients. So
$$
\tau \Big( \frac{1}{(x)_\infty} \Big) = \sum_{d=0}^\infty \frac{x^d}{(q^{-1}; q^{-1})_d} \quad \in \quad \bC(q) \llbracket x \rrbracket . 
$$
A second application of the $q$-binomial formula implies that 
$$
\tau \Big( \frac{1}{(x)_\infty} \Big) = (qx)_\infty. 
$$
\end{Example}

Now the mirror map $q\mapsto q^{-1}$ makes sense for formal power series with coefficients rational in $q$, such as in (\ref{double-series}). Our main theorem is the following.

\begin{Theorem}\label{main-theorem}
Let $(\fX, \fX^!)$ be a mirror pair of toric stacks. Let $q^{z_i \partial_{z_i}}$ denote the $q$-difference operator that shifts $z_i \mapsto q z_i$, and similar with $q^{a_i \partial_{a_i}}$. 

\begin{enumerate} [1)]
\setlength{\parskip}{1ex}

\item The modified $I$-function of $\fX$ with effective level structure satisfies the following two sets of $q$-difference equations, with respect to the K\"ahler and equivariant parameters. 

\begin{itemize} 
	\setlength{\parskip}{1ex}

\item Let $\{ e_i \}_{i=1}^n$ be the standard basis of $\bZ^n$, and consider any $\sum_{i=1}^n \mu_i e_i \in  \ker \beta $ such that $\mu_i = \pm 1  $ or $0$.  Denote by $S_\pm$ the subset of indices with $\mu_i= \pm 1$. Then  
\begin{equation} \label{eqn-for-kalher}
\left[ \prod_{i\in S_+} ( z_i^{-1} (1- q^{- z_i \partial_{z_i}} ) ) - \prod_{i\in S_-} (z_i^{-1} (1- q^{- z_i \partial_{z_i}}  ) ) \right] \widetilde I^{\eff} (\fX) = 0. 
\end{equation}		

\item Let $\{ e^!_i \}_{i=1}^n$ be the standard basis of $\bZ^n$ in the dual exact sequence, and consider any $\sum_{i=1}^n \mu^!_i e^!_i \in  \ker \iota^T $ such that $\mu^!_i = \pm 1  $ or $0$. Denote by $R_\pm$ the subset of indices with $\mu^!_i=\pm 1$. Then 
\begin{equation} \label{eqn-for-equiv}
\left[ \prod_{i\in R_+} (a_i^{-1} (1- q^{ a_i \partial_{a_i}} ) ) -  \prod_{i\in R_-} ( a_i^{-1} (1- q^{ a_i \partial_{a_i}} ) ) \right] \left( e^{\sum_{i=1}^n \frac{\ln z_i \ln a_i}{\ln q} } \cdot \widetilde I^{\eff} (\fX) \right) = 0 . 
\end{equation}	

\end{itemize}
Moreover, the solution to the above difference equations is unique, with certain prescribed asymptotic initial condition (see Lemma \ref{asymptotic}). 

\item Under the mirror map 
$$
\tau (z_i^!) = a_i, \qquad \tau (a_i^!) = z_i, \qquad \tau (q) = q^{-1},  
$$
the two sets of $q$-difference equations (\ref{eqn-for-kalher}) (\ref{eqn-for-equiv}), for modified $I$-functions of the mirror pair $(\fX, \fX^!)$ with the effective level structure, coincide with each other. Therefore, combining it with the uniqueness result, we have
$$
\widetilde I^{\eff}(\fX) = e^{\sum_{i=1}^n \frac{\ln z_i \ln a_i}{\ln q} } \cdot \tau ( \widetilde I^{\eff} (\fX^!) )	. 
$$

\end{enumerate}

\end{Theorem}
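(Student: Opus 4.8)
The plan is to prove parts 1) and 2) in sequence, with part 2) being essentially a formal consequence of part 1) once the combinatorial dual symmetry is set up correctly. For part 1), the strategy is to verify the $q$-difference equations fixed-point by fixed-point. Fix $\bp \in \fX^\sT$ and recall from Section \ref{sec-3} that
$$
\widetilde I^{\eff}(\bp) = e^{-\sum_{i\not\in\bp}\frac{\ln z_i \ln U_i|_\bp}{\ln q}} \cdot \prod_{i\not\in\bp}\frac{1-U_i^{-1}|_\bp}{(U_i|_\bp)_\infty} \cdot I^{\eff}(\bp),
$$
and that $I^{\eff}(\bp)$ is, up to the prefactor, a sum over $d \in \bZ^k$ (using Remark \ref{sum-over-lattice}) of an explicit $q$-hypergeometric term whose $z$-dependence is $\prod_{i=1}^n (q^{1/2} z_i)^{D_i}$ with $D = \iota d$. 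First I would compute the effect of $q^{-z_i\partial_{z_i}}$ (shift $z_i \mapsto q^{-1}z_i$) on each summand: since $D_i = \sum_j \iota_{ij} d_j$, shifting $z_i$ changes the summand by a monomial factor depending on $D_i$, and the operator $z_i^{-1}(1 - q^{-z_i\partial_{z_i}})$ applied to the full series should, after reindexing the summation lattice, produce a telescoping/shift in $d$. For $\sum \mu_i e_i \in \ker\beta$ with $\mu_i \in \{0,\pm 1\}$, the vector $\mu$ lies in the image of $\iota$, say $\mu = \iota\nu$ for some $\nu \in \bZ^k$ (using that $\ker\beta = \operatorname{im}\iota$); then $\prod_{i\in S_+} z_i^{-1}(1-q^{-z_i\partial_{z_i}})$ and $\prod_{i\in S_-} z_i^{-1}(1-q^{-z_i\partial_{z_i}})$ both implement, on the $q$-hypergeometric series, the same shift $d \mapsto d + \nu$ up to the $q$-Pochhammer recursion $(x)_{D+1} = (1-q^D x)(x)_D$ — the standard mechanism by which GKZ-type hypergeometric series satisfy their defining box operators. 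The exponential prefactor and the $\prod_{i\not\in\bp}(1-U_i^{-1}|_\bp)/(U_i|_\bp)_\infty$ factor must be checked to be compatible with this shift; the prefactor $e^{-\sum \ln z_i \ln U_i|_\bp/\ln q}$ is precisely designed so that the $q$-shift in $z_i$ conjugates into multiplication by a power of $U_i|_\bp$, converting the naive hypergeometric recursion into the clean symmetric form $\prod_{S_+} = \prod_{S_-}$. I would carry this out for one $\bp$ and note the computation is uniform in $\bp$, hence the sum $\widetilde I^{\eff}(\fX)$ satisfies (\ref{eqn-for-kalher}).

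For the equivariant equation (\ref{eqn-for-equiv}), I would run the dual version of the same computation: the $U_i|_\bp$ for $i\not\in\bp$ are, by (\ref{solution}), monomials in the $a_i$ determined by $\beta$, and the operator $a_i^{-1}(1-q^{a_i\partial_{a_i}})$ acting on the rewritten series (after multiplying by $e^{\sum \ln z_i\ln a_i/\ln q}$, which again is the conjugating factor that turns the $a$-shift into the right monomial multiplication) implements a shift governed by $\ker\iota^T = \operatorname{im}\beta^T = \operatorname{im}\iota^!$ — exactly the kernel appearing in the statement. This is the mirror-symmetric counterpart and the computation is structurally identical, using the combinatorial duality $\cK(\bp^!) = \cA(\bp)$, $\cA(\bp^!) = \cK(\bp)$ from the Lemma in Section \ref{sec-4}. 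The uniqueness statement I would reduce to Lemma \ref{asymptotic}: the space of solutions of each holonomic $q$-difference system is finite-dimensional over $\bC(q)$, the two systems together cut it down, and matching the prescribed asymptotic leading term (the $q=0$ or large-parameter behavior of $\widetilde I^{\eff}$, which by construction is a single explicit monomial/product) pins down the solution uniquely.

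With part 1) in hand, part 2) is the formal endgame. Apply the mirror map $\tau$: $z_i^! \mapsto a_i$, $a_i^! \mapsto z_i$, $q \mapsto q^{-1}$, and $\bp \leftrightarrow \bp^!$. Under $\tau$, the K\"ahler $q$-difference system (\ref{eqn-for-kalher}) for $\fX^!$ — whose combinatorics is controlled by $\ker\beta^! = \ker\iota^T$ — is carried precisely onto the equivariant system (\ref{eqn-for-equiv}) for $\fX$, and vice versa; the factor $e^{\sum\ln z_i\ln a_i/\ln q}$ is $\tau$-invariant up to the sign flip in $q$ built into (\ref{eqn-for-equiv}) versus (\ref{eqn-for-kalher}), which is why the conjugating exponential appears on one side and not the other. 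Hence $e^{\sum_{i}\frac{\ln z_i\ln a_i}{\ln q}}\cdot\tau(\widetilde I^{\eff}(\fX^!))$ satisfies the same pair of $q$-difference equations as $\widetilde I^{\eff}(\fX)$. Finally I would check the two have the same asymptotic initial term — this amounts to matching the minimal fixed point's leading contribution on both sides, which follows from the fixed-point bijection $\bp\leftrightarrow\bp^!$ and a direct comparison of the explicit prefactors — so the uniqueness from part 1) forces equality.

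\textbf{Main obstacle.} The delicate point is bookkeeping the interplay between the exponential conjugating prefactors and the $q$-Pochhammer recursions, and in particular handling the half-integer powers of $q$ coming from the effective level term $q^{D_i(D_i+1)/2}$: one must verify that after the shift in $d$ these recombine into the exact symmetric form $\prod_{S_+} = \prod_{S_-}$ with no leftover $q$-power or sign, and that the expansion (\ref{double-series}) as a genuine power series in the $U_i|_\bp$ (so that $q\mapsto q^{-1}$ is well-defined) is respected at every step. Making the "uniform in $\bp$" claim rigorous — i.e. that the same difference operator annihilates every $\widetilde I^{\eff}(\bp)$ despite the $\bp$-dependent summation cones — also requires care, and is where Remark \ref{sum-over-lattice} (summation over all of $\bZ^k$) does the essential work.
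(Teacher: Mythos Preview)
Your overall architecture matches the paper's: verify the $q$-difference equations fixed-point by fixed-point, establish uniqueness via the asymptotic initial condition, and then conclude Part 2) by showing the mirror map carries one system to the other and that the asymptotics match. The difference is in the engine you use for Part 1). You propose the standard GKZ box-operator mechanism: write $\mu=\iota\nu$, apply $\prod_{S_\pm} z_i^{-1}(1-q^{-z_i\partial_{z_i}})$, reindex $d\mapsto d-\nu$, and watch the two sides coincide. This works. The paper instead proves a single ``linear relation''
\[
\bigl(q^{-z_i\partial_{z_i}} + z_i\, q^{a_i\partial_{a_i}} - 1\bigr)\,\widetilde I^{\eff}(\bp) = 0,\qquad 1\le i\le n,
\]
linking the K\"ahler and equivariant shifts, and then observes that (\ref{eqn-for-kalher}) is equivalent, via this relation, to the trivial statement that $\widetilde I^{\eff}(\bp)$ depends only on \emph{effective} equivariant parameters (it is built from the $U_i|_\bp$), while (\ref{eqn-for-equiv}) becomes the equally trivial statement that $e^{\sum\ln z_i\ln a_i/\ln q}\widetilde I^{\eff}(\bp)$ depends only on effective K\"ahler parameters. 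This buys two things: it handles both systems of equations at once with no separate ``dual computation'', and it makes the mirror symmetry of the equations themselves transparent, since the linear relation is manifestly symmetric under $z\leftrightarrow a$, $q\leftrightarrow q^{-1}$.

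Two smaller points. First, your ``main obstacle'' about the half-integer powers $q^{D_i(D_i+1)/2}$ is a non-issue: the identity $(qU^{-1};q)_D = (-1)^D U^{-D} q^{D(D+1)/2}(q^{-1}U;q^{-1})_D$ absorbs the level contribution entirely, and the paper works throughout with the clean form $\sum_d z^D/\prod_i(q^{-1}U_i|_\bp;q^{-1})_{D_i}$ in which no such powers survive. Second, for the asymptotic matching in Part 2) you say ``direct comparison of the explicit prefactors''; in practice this is where the actual content lives --- the paper needs the $q$-binomial formula to show that $\tau$ of the leading $(z^!)$-series on the $\bp^!$ side collapses to $\prod_{i\notin\bp}1/(U_i|_\bp)_\infty$, and a short but nontrivial computation to show that $\tau$ carries the exponential prefactor of $\bp^!$ to $e^{\sum\ln z_i\ln a_i/\ln q}$ times the exponential prefactor of $\bp$. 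Your proposal gestures at this but underestimates it.
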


The rest of this section is to prove the main theorem. Before the proof of the general case, we give a direct computation in the special case of projective spaces.

\subsection{Special case: $\bP^N$} 
Let's consider the following exact sequence
\begin{equation} 
\xymatrix{
	1 \ar[r] & \bC^* \ar[r]^-\iota & (\bC^*)^{N+1} \ar[r]^-\beta & (\bC^*)^{N} \ar[r] & 1,
}  \label{exact-seq-P^N}
\end{equation}
where $\iota$ and $\beta $ are given as follows
\begin{align*}
\iota = \left( \begin{array}{ccccc} 1   \\ 1    \\ \vdots  \\ 1 \end{array}  \right)_{(N+1) \times 1},	\qquad 
\beta = \left( \begin{array}{ccccc} 1& 0 & \cdots & 0  & -1  \\ 0 & 1 & \cdots & 0&-1   \\ \vdots & \vdots & \ddots & \vdots & \vdots \\0 & 0 & \cdots &1 & -1 \end{array}  \right)_{N \times (N+1)} . 
\end{align*}
Then the GIT quotient with respect to $\iota $ is projective space $\bP^{N}$ and $\beta$ gives the mirror. There is only one GIT chamber for $(\bC^*)^{N+1}/\!/_\theta \bC^* $, whose GIT quotient is $\bP^N$. The $(\bC^*)^{N+1}$--fixed points are given by $\bp_j := \{ j\} \subset \{1, \cdots, N+1 \}  $. It's well known that its $I$-function is  \cite{Giv-perm-II}
\begin{align*}
	I_{\mathbb{P}^N}=\sum_{d \geq 0} \frac{Q^d}{\prod_{k=1}^{d}\prod_{i=1}^{N+1}(1-U_i^{-1}q^k)} . 
\end{align*}
The restriction to one of the $(\mathbb{C}^*)^{N+1}$-fixed points is
$$
	I_{\mathbb{P}^N}|_{\bp_j} =\sum_{d \geq 0} \frac{Q^d}{\prod_{i=1}^{N+1}(qU_i^{-1}|_{\bp_j};q)_d }  , 
$$
where
\begin{align*}
U_i^{-1}|_{\bp_j}  = \left\{ \begin{aligned}
	& 1, \qquad && i\in \bp_j \\
	& a_j/a_i . \qquad && i\not\in \bp_j
	\end{aligned} \right. 	
\end{align*}
Then the restriction of $I$-function with effective level structure to the fixed point $\bp_j$ is 
\begin{align*}
I^{\eff}_{\mathbb{P}^N}|_{\bp_j}&=\sum_{d \geq 0} \frac{Q^d}{\prod_{i=1}^{N+1}(q^{-1}U_i|_{\bp_j};q^{-1})_d } \\
&=\sum_{d \geq 0} \frac{(z_1\cdots z_{N+1})^d}{\prod_{i=1}^{N+1}(q^{-1}a_i/a_j;q^{-1})_d}	. 
\end{align*}

Let's consider the mirror of projective space $\mathbb{P}^n$, i.e. the GIT quotient 
\begin{align*}
	\mathbb{C}^{n+1}/\!/_{\theta} (\mathbb{C}^*)^n
\end{align*} 
with charge matrix coming from the dual exact sequence of (\ref{exact-seq-P^N}), i.e.
\begin{equation} 
\xymatrix{
	1 \ar[r] & (\bC^*)^N \ar[r]^-{\iota^{!}} & (\bC^*)^{N+1} \ar[r]^-{\beta^{!}} & \bC^* \ar[r] & 1,
}  
\end{equation}
where
\begin{align*}
\iota^{! } = \beta^T=  \left( \begin{array}{ccccc} 1 & 0 & \cdots & 0    \\ 0 & 1 &   \cdots & 0    \\ \vdots & \vdots & \ddots & \vdots  \\ 0 & 0 &  & 1 \\  -1 & -1 & \cdots & -1  \end{array}  \right)_{(N+1) \times N}	. 
\end{align*}
Let $v_j= (0, \cdots, 1,\cdots,0)$, $j=1,\cdots,N$ be the standard basis of $\bR^N$, and let $v_{N+1}=(-1,-1,\cdots,-1)$. Using the Hilbert--Mumford criterion, we find there are $N+1$ generic chambers
$$
C_j := \bR_{\geq 0} v_1 + \cdots +\bR_{\geq 0}\widehat{v_j}+ \cdots + \bR_{\geq 0} v_{N+1}  . 
$$
For example, the following picture is the chamber structures of $\bP^2$ and its mirror.
\begin{align*}
\begin{tikzpicture}[line cap=round,line join=round,>=triangle 45,x=1.8cm,y=1.8cm]
\draw [color=cqcqcq,, xstep=0.5cm,ystep=0.5cm] (-5.30540795946268,-1.419981614718684);
\clip(-4.60540795946268,-1.419981614718684) rectangle (2.9187966264045735,1.781117965598392);
\fill[line width=2pt,color=yqyqyq,fill=yqyqyq,fill opacity=1] (0,1) -- (0,0) -- (1,0) -- (1,1) -- cycle;
\fill[line width=2pt,color=aqaqaq,fill=aqaqaq,fill opacity=1] (0,1) -- (-1,1) -- (-1,-1) -- (0,0) -- cycle;
\fill[line width=2pt,color=cqcqcq,fill=cqcqcq,fill opacity=1] (-1,-1) -- (1,-1) -- (1,0) -- (0,0) -- cycle;
\draw [->,line width=2pt] (-4,0) -- (-2,0);
\draw [->,line width=1.5pt] (0,0) -- (1,0);
\draw [->,line width=1.5pt] (0,0) -- (0,1);
\draw [->,line width=1.5pt] (0,0) -- (-1,-1);
\begin{scriptsize}
\draw[color=black] (-3.0111538487800438,0.19915013854564284) node {$v_{1}$};
\draw [fill=uuuuuu] (0,0) circle (2pt);
\draw[color=black] (0.8859117551108426,-0.15764689723853465) node {$v_1$};
\draw[color=black] (-0.18535952561240973,0.9245779349409153) node {$v_2$};
\draw[color=black] (-0.5577311139628201,-0.7371578312203483) node {$v_3$};
\draw[color=yqyqyq] (0.4810218222390622,0.6293456922219092) node {$C_1$};
\draw[color=aqaqaq] (-0.5480734238100464,0.3509838633725603) node {$C_2$};
\draw[color=cqcqcq] (0.21953040725937067,-0.43349038156651337) node {$C_3$};
\end{scriptsize}
\end{tikzpicture}	
\end{align*}
The GIT quotient for each $C_j$ is the affine space $\bC$, containing exactly one fixed point
$$
\bp^{!}_j = \{1,\ldots,N+1 \} \backslash \{ j \} . 
$$
 
The restriction of the $I$-function with effective level is 
\begin{align*}
I^{\eff}|_{\bp_j^!} = \sum_{D \in \Eff(\bp_j^!)} \frac{(z^!)^D}{\prod_{i=1}^{N+1}(q^{-1}U_i|_{\bp_j^!;q^{-1}})_{D_i}}	, 
\end{align*}
where
\begin{align*}
\Eff(\bp_j^!) = \{ D_i \geq 0, \ i \neq j, \   D_j = -D_1 - \cdots - \widehat{D_j} - \cdots - D_{N+1} \leq 0 \}	. 
\end{align*}
So
\begin{equation} \label{I_j} 
{I}^{\eff}|_{\bp_j^!} 
=\sum_{\Eff(\bp_j^!)} \frac{\prod_{i=1,i\neq j}^{N+1}(z^!_i/z^!_{j})^{D_i}}{((a^!_1 \cdots a^!_{N+1})q^{-1};q^{-1})_{D_j} \prod_{i=1,i\neq j}^{N+1}(q^{-1}; q^{-1})_{D_i} }	. 
\end{equation}

Recall the $q$-binomial formula.
$$
\frac{(ax)_\infty}{(x)_\infty} = \sum_{m=0}^\infty \frac{(a)_m}{(q)_m} x^m, \qquad |q|<1, \ |x|<1. 
$$
Let $a = 0$, or let $a = b/x$ and let $x \to 0$. Get
\begin{align} 
\frac{1}{(x)_\infty} = \sum_{m=0}^\infty \frac{x^m}{(q)_m} , \qquad  (b)_\infty = \sum_{m=0}^\infty \frac{q^{m(m-1)/2} (-b)^m }{(q)_m} = \sum_{m=0}^\infty \frac{(q^{-1} b)^m }{(q^{-1}; q^{-1} )_m} .  \label{q-binomial-formula}
\end{align}
Then under the mirror map, we have
\begin{align} \label{I_j-mirror}
&(I^{\eff}_{\bP^N}|_{\bp_{j}}  )(z_i \mapsto a_i^!, a_i \mapsto z_i^!, q \mapsto q^{-1}  ) \nonumber \\
&= \sum_{d \geq 0 }\frac{(a_1^!\cdots a_{N+1}^!)^d}{\prod_{i=1}^{N+1}(q z^{!}_{i} / z^{!}_{j};q )_d} \nonumber  \\
&= \frac{1}{\prod_{i \neq j }^{N+1}(qz^!_i/z^!_j ;q )_\infty} \sum_{d \geq 0} \frac{(a_1^!\cdots a_{N+1}^!)^d}{(q;q)_d} \prod_{i=1, i \neq j}^{N+1} (q^{d+1}z^!_i/z^!_j;q)_\infty \nonumber \\
&=\frac{1}{\prod_{i \neq j }^{N+1}(qz^!_i/z^!_j ;q )_\infty} \sum_{d \geq 0} \frac{(a_1^!\cdots a_{N+1}^!)^d}{(q;q)_d} \prod_{i=1,i\neq j}^{N+1} \left( \sum_{D_i \geq 0} \frac{(q^{d}z^!_i/z^!_j )^{D_i}}{(q^{-1};q^{-1})_{D_i}}   \right) \nonumber \\
&=\frac{1}{\prod_{i \neq j }^{N+1}(qz^!_i/z^!_j ;q )_\infty}  \sum_{D \geq 0} \left(\frac{ \prod_{i=1,i\neq j}^{N+1}(z^!_i/z^!_j)^{D_i}}{ \prod_{i=1,i\neq j}^{N+1}(q^{-1};q^{-1})_{D_i}}  \sum_{d \geq 0 } \frac{\left( q^{\sum_{i=1,i\neq j}^{N+1}D_i}a^!_1\cdots a^!_{N+1}\right)^d}{(q;q)_d}  \right) \nonumber \\
&=\frac{1}{(a_1^!\cdots a_{N+1}^!;q)_\infty\prod_{i \neq j }^{N+1}(qz^!_i/z^!_j ;q )_\infty}  \\ \nonumber
&\times \sum_{D_1,\cdots \hat{j} \cdots D_{N+1} \geq 0}\frac{ \prod_{i=1,i\neq j}^{N+1}(z^!_i/z^!_j)^{D_i}}{ \prod_{i=1,i\neq j}^{N+1}(q^{-1};q^{-1})_{D_i}(q^{-1}a^!_1 \cdots a^!_{N+1};q^{-1})_{-D_1-\cdots \hat{j} \cdots-D_{N+1}} }  . 
\end{align}
Comparing the above formula (\ref{I_j-mirror}) with (\ref{I_j}), we know that under the change of K\"ahler parameters with equivariant parameters and also the change $q \mapsto q^{-1} $, the $I$-functions with effective level structure restricting to the corresponding fixed points are the same by multipling a prefactor.

The modified $I$-function with effective level for $\bP^N$ is as follows,
\begin{align*}
\widetilde{I}_{\bP^N}^{\eff}(\bp_j) &= 	e^{-\sum_{i\not\in {\bp_j}} \frac{\ln z_i \ln U_i |_{\bp_j} }{\ln q} } \cdot  \prod_{i\not\in {\bp_j}} \frac{1}{ (U_i |_{\bp_j} )_\infty } \sum_{\Eff(\bp_j^!)} \frac{\prod_{i=1,i\neq j}^{N+1}(z^!_i/z^!_{j})^{D_i}}{((a^!_1 \cdots a^!_{N+1})q^{-1};q^{-1})_{D_j} \prod_{i=1,i\neq j}^{N+1}(q^{-1}; q^{-1})_{D_i} }	.
\end{align*}
In the following, let's compute the prefactor under the mirror map.
\begin{align*}
	\tau( \sum_{i=1, i \neq j}^{N+1} \ln z_i \ln a_i/a_j ) &=  \sum_{i=1, i \neq j}^{N+1} \ln a^!_i \ln z^!_i/z^!_j  \\
	&= \sum_{i=1}^{N+1} \ln a_i^! \ln z_i^! - \ln z_j^! \ln \big( \prod_{i=1}^{N+1} a_i^!  \big),
\end{align*}
and from Example \ref{example-for-tau}, we have
\begin{align*}
\tau \big( \prod_{i=1,i \neq j}^{N+1} \frac{1}{(a_i/a_j)_{\infty}}	\big) = \prod_{i=1, i \neq j}^{N+1} (qz^!_i/z^!_j)_\infty .
\end{align*}
In summary, we obtain
\begin{align*}
\tau (\widetilde{I}_{\bP^N}^{\eff}(\bp_j)) = e^{\sum_{i=1}^{N+1}\frac{\ln z_i^! \ln a_i^!}{\ln q}} 	\widetilde{I}^{\eff}(\bp_j^!)
\end{align*}
 In the following subsection, we consider general cases, and compare the modified $I$-function with level structure.

\subsection{Proof of the main theorem}
In this subsection we prove our main Theorem \ref{main-theorem}. We briefly summarize the structure of the proof. First, we use the explicit formula of the modified $I$-function to find the $q$-difference equations. We then show the uniqueness of the $q$-difference equations in Lemma \ref{asymptotic}, which uniquely characterizes the modified $I$-functions. Finally, we compare the modified $I$-functions of a mirror pair, under the mirror map, and identify them. 

\begin{proof}[Proof of Theorem \ref{main-theorem} 1): $q$-difference equations] 
By definition of the modified $I$-function with effective level structure, it suffices to prove this for the contribution from each fixed point $\widetilde{I}^{\eff}(\bp)$.  Let $\bp \in \fX^\sT$ be a fixed point. Let's compute the actions of $q$-difference operators on the $I$-function. Recall that by Remark \ref{sum-over-lattice}), 
$$
\widetilde I^{\eff} (\bp) = e^{-\sum_{i\not\in \bp} \frac{\ln z_i \ln U_i |_\bp }{\ln q} }  \cdot  \prod_{i\not\in \bp} \frac{1}{ (U_i |_\bp )_\infty } \cdot \sum_{d\in \bZ^k} \frac{z^D}{\prod_{i=1}^n (q^{-1} U_i |_\bp; q^{-1} )_{D_i} } . 
$$
Therefore for all $1\leq i\leq n$,
$$
q^{- z_i \partial_{z_i}}  \widetilde I^{\eff} (\bp) = e^{-\sum_{i\not\in \bp} \frac{\ln z_i \ln U_i |_\bp }{\ln q} } \cdot  \prod_{i\not\in \bp} \frac{1}{ (U_i |_\bp )_\infty } \cdot \sum_{d\in \bZ^k} \frac{z^D}{\prod_{i=1}^n (q^{-1} U_i |_\bp; q^{-1} )_{D_i} } \cdot q^{-D_i} U_i |_\bp  . 
$$

To compute the action of $q^{-a_i \partial_{a_i}}$'s, we need an explicit expression of $U_i |_\bp$ in terms of $a_i$'s. Let $P$ be the $k\times k$ submatrix of $\iota$ with rows in $\bp$, which is of full rank, and let $Q$ be the $d\times k$ submatrix of $\iota$ with rows not in $\bp$. Let $C:= Q P^{-1}$. The unique solution to the system (\ref{solution}) is then
$$
U_i |_\bp = \left\{ \begin{aligned}
& 1 , && \qquad i\in \bp \\
& a_i \prod_{j\in \bp} a_j^{-C_{ij}}, && \qquad i\not\in \bp. 
\end{aligned} \right. 
$$
We see that for $i\not\in \bp$, 
$$
q^{ a_i \partial_{a_i}}  \widetilde I^{\eff} (\bp) = e^{-\sum_{i\not\in \bp} \frac{\ln z_i \ln U_i |_\bp }{\ln q} }  \cdot  \prod_{i\not\in \bp} \frac{1}{ (U_i |_\bp )_\infty } \cdot \sum_{d\in \bZ^k} \frac{z^D}{\prod_{i=1}^n (q^{-1} U_i |_\bp; q^{-1} )_{D_i} } \cdot z_i^{-1} (1 - q^{-D_i} U_i |_\bp ), 
$$
and for $j\in \bp$, 
\ben
q^{ a_j \partial_{a_j}}  \widetilde I^{\eff} (\bp) &=& e^{-\sum_{i\not\in \bp} \frac{\ln z_i \ln (q^{-C_{ij}} U_i |_\bp) }{\ln q} } \cdot  \prod_{i\not\in \bp} \frac{1}{ (q^{-C_{ij}} U_i |_\bp )_\infty } \\
&& \cdot \sum_{d\in \bZ^k} \frac{z^D}{\prod_{i\in \bp} (q^{-1}; q^{-1})_{D_i} \prod_{i\not\in\bp} (q^{-1 - C_{ij} } U_i |_\bp; q^{-1} )_{D_i} } \\
&=& e^{-\sum_{i\not\in \bp} \frac{\ln z_i \ln U_i |_\bp }{\ln q} } \cdot  \prod_{i\not\in \bp} \frac{1}{ (U_i |_\bp )_\infty } \cdot \sum_{d\in \bZ^k} \frac{z^D}{\prod_{i\in \bp} (q^{-1}; q^{-1})_{D_i} \prod_{i\not\in\bp} (q^{-1} U_i |_\bp; q^{-1} )_{D_i + C_{ij}}  } \\
&& \cdot \prod_{i\not\in \bp} \left( z_i^{C_{ij}} (U_i |_\bp)_{-C_{ij}} (q^{-1} U_i |_\bp ; q^{-1} )_{C_{ij}} \right) \\
&=& e^{-\sum_{i\not\in \bp} \frac{\ln z_i \ln U_i |_\bp }{\ln q} } \cdot  \prod_{i\not\in \bp} \frac{1}{ (U_i |_\bp )_\infty } \cdot \sum_{d\in \bZ^k} \frac{\prod_{i\in \bp} z_i^{D_i} \prod_{i\not\in \bp} z_i^{D_i + C_{ij} } }{\prod_{i\in \bp} (q^{-1}; q^{-1})_{D_i} \prod_{i\not\in\bp} (q^{-1} U_i |_\bp; q^{-1} )_{D_i + C_{ij}}  } \\ 
&=& e^{-\sum_{i\not\in \bp} \frac{\ln z_i \ln U_i |_\bp }{\ln q} } \cdot  \prod_{i\not\in \bp} \frac{1}{ (U_i |_\bp )_\infty } \\
&& \cdot \sum_{d\in \bZ^k} \frac{z_j^{D_j + 1} \prod_{i\in \bp \backslash\{j\} } z_i^{D_i} \prod_{i\not\in \bp} z_i^{D_i + C_{ij} }  \cdot z_j^{-1} (1 - q^{-D_j - 1} )}{(q^{-1}; q^{-1} )_{D_j+1} \prod_{i\in \bp \backslash \{j\} } (q^{-1}; q^{-1})_{D_i} \prod_{i\not\in\bp} (q^{-1} U_i |_\bp; q^{-1} )_{D_i + C_{ij}}  }  \\
&=& e^{-\sum_{i\not\in \bp} \frac{\ln z_i \ln U_i |_\bp }{\ln q} } \cdot  \prod_{i\not\in \bp} \frac{1}{ (U_i |_\bp )_\infty } \cdot \sum_{d\in \bZ^k} \frac{z^D}{\prod_{i=1}^n (q^{-1} U_i |_\bp; q^{-1} )_{D_i} } \cdot z_j^{-1} (1 - q^{-D_j} ) , 
\een
where we used the identity $(x)_d \cdot (q^{-1} x; q^{-1})_{-d} = 1$. Comparing the results above, we obtain the conclusion that for any $1\leq i\leq n$, the modified $I$-function $\widetilde I (\bp)$ satisfies the following $q$-difference equations:
\begin{align} \label{linear-relation}
\left( q^{- z_i \partial_{z_i}} + z_i q^{ a_i \partial_{a_i}} -1 \right) \widetilde I^{\eff} (\bp) = 0. 
\end{align}

By (\ref{linear-relation}), (\ref{eqn-for-kalher}) is equivalent to the identity
$$
\prod_{i\in S_+} q^{a_i \partial_{a_i}}  \prod_{i\in S_-} q^{-a_i \partial_{a_i}} \widetilde I (\bp) = \widetilde I^{\eff}(\bp), 
$$
which follows from the fact that $\widetilde I(\bp)$ depends only on $a_i$'s in terms of $U_i |_\bp$, and hence only depends on the \emph{effective} equivariant parameters. Indeed, the \emph{effective} equivariant parameter is as follows
\begin{align*}
\Lambda_b=\prod_{j=1}^{n} a_j^{\beta_{bj}}, \qquad 1\leq b\leq d. 	
\end{align*}
Then
\begin{align*}
\prod_{i\in S_+} q^{a_i \partial_{a_i}}  \prod_{i\in S_-} q^{-a_i \partial_{a_i}} \Lambda_b = q^{\sum_{i \in S_+ }\beta_{bi} - \sum_{i \in S_{-} }\beta_{bi}} \cdot \prod_{j=1}^n a_j^{\beta_{bj}}		. 
\end{align*}
 Let $\{ e_i \}_{i=1}^n$ be the standard basis of $\bZ^n$, and consider any $\sum_{i=1}^n \mu_i e_i \in  \ker \beta $ such that $\mu_i = \pm 1  $ or $0$. Denote by $S_\pm$ the subset of indexes that $\mu_i= \pm 1$. We know that $\sum_{i \in S_+ }\beta_{bi} - \sum_{i \in S_{-} }\beta_{bi} = 0$, and hence $q^{\sum_{i \in S_+ }\beta_{bi} - \sum_{i \in S_{-} }\beta_{bi}}=1$. 

For (\ref{eqn-for-equiv})  , note that $e^{\sum_{i=1}^n \frac{\ln z_i \ln a_i}{\ln q} } \cdot \widetilde I^{\eff} (\bp)$ satisfies an analogue to Proposition \ref{linear-relation}:
$$
\left( a_iq^{- z_i \partial_{z_i}} +  q^{ a_i \partial_{a_i}} -1 \right)  \left( e^{\sum_{i=1}^n \frac{\ln z_i \ln a_i}{\ln q} } \cdot \widetilde I^{\eff} (\bp) \right) = 0. 
$$
Moreover, we have
$$
\sum_{i=1}^n \ln z_i \ln a_i - \sum_{i\not\in\bp} \ln z_i \ln u_i |_\bp = - \sum_{j\not\in p} \ln \Big( z_j^{-1} \prod_{i\in \bp} z_i^{-C_{ij}} \Big) \ln a_j , 
$$
and hence $e^{\sum_{i=1}^n \frac{\ln z_i \ln a_i}{\ln q} } \cdot \widetilde I (\bp)$ depends only on the \emph{effective} K\"ahler parameters. (\ref{eqn-for-equiv}) then follows from similar arguments as (\ref{eqn-for-kalher}) does. 
\end{proof}

We can also deduce the $q$-difference equations satisfied by the $I$-functions without modification. 

\begin{Corollary}
Let $\{ e_i \}_{i=1}^n$ be the standard basis of $\bZ^n$, and consider any $\sum_{i=1}^n \mu_i e_i \in  \ker \beta $ such that $\mu_i = \pm 1  $ or $0$. Denote by $S_\pm$ the subset of indexes that $\mu_i=\pm 1$. Then the $I$-function (without modification) $I^{\eff} (\bp)$ satisfies
\begin{equation} \label{q-diff-z-2}
\left[ \prod_{i\in S_+} ( z_i^{-1} (1- U_i |_\bp \cdot q^{- z_i \partial_{z_i}} ) ) - \prod_{i\in S_-} (z_i^{-1} (1- U_i |_\bp \cdot q^{- z_i \partial_{z_i}}  ) ) \right] I^{\eff} (\bp) = 0. 
\end{equation}
\end{Corollary}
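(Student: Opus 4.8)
The plan is to deduce (\ref{q-diff-z-2}) from the $q$-difference equation (\ref{eqn-for-kalher}), which the proof of Theorem \ref{main-theorem} 1) establishes fixed-point-by-fixed-point for the \emph{modified} $I$-function $\widetilde I^{\eff}(\bp)$, by conjugating the difference operator with the explicit prefactor relating $\widetilde I^{\eff}(\bp)$ to $I^{\eff}(\bp)$. Recall that
$$
\widetilde I^{\eff}(\bp) = P(\bp)\cdot I^{\eff}(\bp), \qquad P(\bp):= e^{-\sum_{i\not\in\bp}\frac{\ln z_i\,\ln U_i|_\bp}{\ln q}}\cdot\prod_{i\not\in\bp}\frac{1-U_i^{-1}|_\bp}{(U_i|_\bp)_\infty}.
$$
The structural observation driving the argument is that the dependence of $P(\bp)$ on a K\"ahler variable $z_i$ is concentrated, for $i\not\in\bp$, in the single exponential factor $e^{-\ln z_i\,\ln U_i|_\bp/\ln q}$, while for $i\in\bp$ the prefactor $P(\bp)$ does not involve $z_i$ at all: the remaining factors $(U_i|_\bp)_\infty$ and $1-U_i^{-1}|_\bp$ are built only from equivariant parameters (indeed $U_i|_\bp = a_i\prod_{j\in\bp}a_j^{-C_{ij}}$ for $i\not\in\bp$, as computed in the proof of Theorem \ref{main-theorem} 1)).

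First I would record the action of the shift operator $q^{-z_i\partial_{z_i}}$ on $P(\bp)$. Since $z_i\mapsto q^{-1}z_i$ and $\ln(q^{-1}z_i)=\ln z_i-\ln q$, for $i\not\in\bp$ the factor $e^{-\ln z_i\,\ln U_i|_\bp/\ln q}$ is carried to $U_i|_\bp\cdot e^{-\ln z_i\,\ln U_i|_\bp/\ln q}$, so that $q^{-z_i\partial_{z_i}}P(\bp)=U_i|_\bp\cdot P(\bp)$; for $i\in\bp$ the identity holds trivially because $U_i|_\bp=1$ and $P(\bp)$ is $z_i$-independent. Since $q^{-z_i\partial_{z_i}}$ is an algebra automorphism and multiplication by $z_i^{-1}$ commutes with multiplication by $P(\bp)$, it follows that for any series $g$ in the redundant K\"ahler parameters,
$$
z_i^{-1}\big(1-q^{-z_i\partial_{z_i}}\big)\big(P(\bp)\,g\big)=P(\bp)\cdot z_i^{-1}\big(1-U_i|_\bp\,q^{-z_i\partial_{z_i}}\big)\,g,
$$
i.e. multiplication by $P(\bp)$ intertwines $z_i^{-1}(1-q^{-z_i\partial_{z_i}})$ with its $U_i|_\bp$-twisted analogue $z_i^{-1}(1-U_i|_\bp\,q^{-z_i\partial_{z_i}})$.

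The operators $z_i^{-1}(1-q^{-z_i\partial_{z_i}})$ for distinct $i$ involve distinct variables and hence commute, and likewise for the twisted versions; so iterating the intertwining relation over $i\in S_+$ and over $i\in S_-$ gives
\begin{align*}
&\left[\prod_{i\in S_+}\big(z_i^{-1}(1-q^{-z_i\partial_{z_i}})\big)-\prod_{i\in S_-}\big(z_i^{-1}(1-q^{-z_i\partial_{z_i}})\big)\right]\big(P(\bp)\,I^{\eff}(\bp)\big) \\
&\qquad = P(\bp)\cdot\left[\prod_{i\in S_+}\big(z_i^{-1}(1-U_i|_\bp\,q^{-z_i\partial_{z_i}})\big)-\prod_{i\in S_-}\big(z_i^{-1}(1-U_i|_\bp\,q^{-z_i\partial_{z_i}})\big)\right]I^{\eff}(\bp).
\end{align*}
The left-hand side is the operator of (\ref{eqn-for-kalher}) applied to $\widetilde I^{\eff}(\bp)$, hence vanishes; cancelling the invertible prefactor $P(\bp)$ yields (\ref{q-diff-z-2}). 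I do not expect a genuine obstacle here: the argument is a formal conjugation. The only points requiring a word of care are that the $q$-difference operators $q^{-z_i\partial_{z_i}}$ are to be read as acting on power series in the redundant parameters $z_i$ — legitimate because, by Remark \ref{sum-over-lattice}, the sum defining $I^{\eff}(\bp)$ may be extended over the whole lattice $\bZ^k$, so the degree shifts induced by $z_i\mapsto q^{-1}z_i$ stay within the range of summation — and that conjugating a product of pairwise commuting operators produces the product of the conjugated factors, which is immediate.
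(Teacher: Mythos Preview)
Your argument is correct and is precisely the deduction the paper has in mind: the Corollary is stated without proof immediately after the computation for $\widetilde I^{\eff}(\bp)$, and the only difference between $I^{\eff}(\bp)$ and $\widetilde I^{\eff}(\bp)$ is the prefactor $P(\bp)$, whose $z_i$-dependence sits entirely in the exponential and therefore conjugates $q^{-z_i\partial_{z_i}}$ into $U_i|_\bp\cdot q^{-z_i\partial_{z_i}}$. Your verification that this conjugation passes through the product over $S_\pm$ and that the remaining factors of $P(\bp)$ are $z_i$-independent is exactly what is needed.
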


To study the uniqueness of the $q$-difference equations, we need the following lemma. 

\begin{Lemma} \label{FG-uniqueness}
Let $K$ be a field. Suppose $f (x_1, \cdots, x_k) \in K(q) \llbracket x_1, \cdots, x_k \rrbracket$ satisfies the following system of $q$-difference equations
$$
\left[ F_j (q^{x_1 \partial_{x_1} }, \cdots, q^{x_k \partial_{x_k} } ) - x_j G_j (q^{x_1 \partial_{x_1} }, \cdots, q^{x_k \partial_{x_k} } ) \right] f (x_1, \cdots, x_k) = 0, \qquad 1\leq j\leq k,  
$$
where $F_j$ and $G_j$'s are polynomials with coefficients in $R(q)$, such that $F_j (q^{n_1}, \cdots, q^{n_k}) \neq 0$ for any $n_1, \cdots, n_k \in \bZ_{\geq 0}$, $n_j >0$. Then the solution $f (x_1, \cdots, x_k) \in R(q) \llbracket x_1, \cdots, x_k \rrbracket$ is uniquely determined by its constant term $f(0, \cdots, 0)$. 
\end{Lemma}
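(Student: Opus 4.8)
The plan is to work coefficient-by-coefficient in the power series expansion. Write $f = \sum_{D \in \bZ_{\ge 0}^k} c_D\, x^D$ with $x^D := x_1^{D_1}\cdots x_k^{D_k}$ and $c_D \in K(q)$; I will show by induction on $|D| := D_1 + \cdots + D_k$ that every $c_D$ is uniquely determined by the prescribed constant term $c_0 = f(0,\dots,0)$.

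The key elementary fact is that $q^{x_i \partial_{x_i}}$ acts diagonally on monomials, $q^{x_i\partial_{x_i}} x^D = q^{D_i} x^D$, so for any polynomial $H$ in $k$ variables one has $H(q^{x_1\partial_{x_1}}, \dots, q^{x_k\partial_{x_k}})\, x^D = H(q^{D_1},\dots,q^{D_k})\, x^D$. Hence $F_j(q^{x_1\partial_{x_1}},\dots)\, f$ has $x^E$-coefficient $F_j(q^{E_1},\dots,q^{E_k})\, c_E$, while $x_j G_j(q^{x_1\partial_{x_1}},\dots)\, f$ has $x^E$-coefficient $G_j(q^{E_1},\dots,q^{E_j-1},\dots,q^{E_k})\, c_{E-e_j}$ when $E_j \ge 1$ and $0$ otherwise, where $e_j$ is the $j$-th standard basis vector; note the shift in the $j$-th argument slot caused by the prefactor $x_j$. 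Reading off the $x^E$-coefficient of the $j$-th equation for any $E$ with $E_j\ge 1$ therefore gives the relation $F_j(q^{E_1},\dots,q^{E_k})\, c_E = G_j(q^{E_1},\dots,q^{E_j-1},\dots,q^{E_k})\, c_{E-e_j}$. By the hypothesis on $F_j$, applied with $n_i = E_i$ so that $n_j = E_j > 0$, the factor $F_j(q^{E_1},\dots,q^{E_k})$ is a nonzero element of $K(q)$, hence invertible, and we may solve for $c_E$ in terms of $c_{E-e_j}$.

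The induction is then immediate: $c_0$ is given; if $|E|\ge 1$, choose any index $j$ with $E_j \ge 1$ and use the displayed recursion, whose right-hand side involves $c_{E-e_j}$ with $|E-e_j| = |E|-1$, already determined by $c_0$ by the inductive hypothesis. This yields uniqueness. I do not anticipate any genuine difficulty; the only points requiring care are bookkeeping — getting the correct argument slot in the substitution $q^{x_i\partial_{x_i}}\mapsto q^{E_i}$, including the shifted slot coming from the $x_j$ in front of $G_j$ — together with the observation that one need \emph{not} check that the $k$ recursions (for different $j$) are mutually consistent, since the lemma asserts only that a solution, if one exists, is pinned down by its constant term. (If desired, one can remark that consistency is automatic once $f$ is assumed to be an honest solution of the system.)
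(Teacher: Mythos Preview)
Your proof is correct and follows essentially the same approach as the paper's: both expand $f$ as a formal power series, use the diagonal action of $q^{x_i\partial_{x_i}}$ on monomials to extract the recursion $F_j(q^{E_1},\dots,q^{E_k})\,c_E = G_j(q^{E_1},\dots,q^{E_j-1},\dots,q^{E_k})\,c_{E-e_j}$, and invoke the nonvanishing hypothesis on $F_j$ to solve for $c_E$ in terms of a coefficient of lower total degree. Your explicit induction on $|D|$ and your remark that consistency of the $k$ recursions need not be checked (since only uniqueness is claimed) are helpful clarifications that the paper leaves implicit.
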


\begin{proof}
Let $f$ be 
$$
f(x_1, \cdots, x_k) = \sum_{n_1, \cdots, n_k\geq 0} f_{n_1, \cdots, n_k} x_1^{n_1} \cdots x_k^{n_k}, \qquad f_{n_1, \cdots, n_k} \in K(q). 
$$
For any $1\leq j\leq k$, the $j$-th $q$-difference equation implies the following recursion relations among the coefficients:
$$
F_j (q^{n_1}, \cdots, q^{n_k} ) f_{n_1, \cdots, n_k} = G_j (q^{n_1}, \cdots, q^{n_j - 1}, \cdots, q^{n_k} ) f_{n_1, \cdots, n_j - 1, \cdots, n_k}, 
$$
for any $n_j\geq 1$, and other $n_i\geq 0$, $i\neq j$. By the nonvanishing assumption on $F_j (q^{n_1}, \cdots, q^{n_k} )$, the coefficient $f_{n_1, \cdots, n_k}$ is determined by $f_{n_1, \cdots, n_j - 1, \cdots, n_k}$. The entire formal power series $f$ is then uniquely determined by $f_{0, \cdots, 0}$. 
\end{proof}

\begin{Lemma} \label{asymptotic}

Let $\bp\in \fX$ be a $\sT$-fixed point. 
	
\begin{enumerate}[1)]

\item The $I$-function $I^{\eff} (\bp)$ is uniquely characterized as the solution to the system of $q$-difference equations (\ref{q-diff-z-2}), taking values in $K_{\sT \times \bC_q^*} (\bp)_\loc \llbracket Q^{\Eff(\bp)} \rrbracket  $, and satisfying the initial condition $I^{\eff}(\bp) |_{Q=0} = 1$. 

\item $\widetilde I^{\eff}(\bp)$ is uniquely characterized as the solution to the system of $q$-difference equations (\ref{eqn-for-kalher}), with the following prescribed asymptotic behavior:
$$
\widetilde I^{\eff} (\bp) \quad \in \quad e^{-\sum_{i\not\in \bp} \frac{\ln z_i \ln u_i |_\bp }{\ln q} } \cdot \prod_{i\not\in \bp} \frac{1}{ (U_i |_\bp )_\infty } \cdot \left( 1+ Q \cdot K_{\sT \times \bC^*_q} (\bp)_\loc \llbracket Q^{\Eff(\bp)} \rrbracket  \right) , 
$$
where $Q \cdot K_{\sT \times \bC^*_q} (\bp)_\loc \llbracket Q^{\Eff(\bp)} \rrbracket$ denotes the maximal ideal in $K_{\sT \times \bC^*_q} (\bp)_\loc \llbracket Q^{\Eff(\bp)} \rrbracket$ generated by monomials $Q^\beta$, with $\beta \in \Eff (\bp) \backslash \{0\}$.

\end{enumerate}
 
\end{Lemma}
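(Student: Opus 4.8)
The plan is to deduce both uniqueness statements from the abstract recursion Lemma \ref{FG-uniqueness}, after recasting the $q$-difference equations (\ref{q-diff-z-2}) and (\ref{eqn-for-kalher}) into the standard form used there. Throughout, a single fixed point $\bp$ is treated, and at the end one sums over $\bp\in\fX^\sT$.

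\emph{Coordinates adapted to $\bp$.} Let $P$ be the $k\times k$ submatrix of $\iota$ with rows indexed by $\bp$; by total unimodularity $\det P=\pm1$, and replacing $\iota$ by $\iota P^{-1}$ -- a $GL(k,\bZ)$ change of basis, which leaves the redundant parameters $z_i$ untouched and merely relabels the effective parameters $Q_j$ -- I may assume the rows of $\iota$ indexed by $\bp$ form the identity matrix. Then $\cK(\bp)=\bR^k_{\geq 0}$, hence $\Eff(\bp)=\cK(\bp)^\vee=\bR^k_{\geq 0}$, so $K_{\sT\times\bC^*_q}(\bp)_\loc\llbracket Q^{\Eff(\bp)}\rrbracket$ is just the power series ring in $Q_1,\dots,Q_k$, and the columns $\mu^{(1)},\dots,\mu^{(k)}$ of $\iota$ form a basis of $\ker\beta=\im\iota$. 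A Cramer's-rule computation -- expressing the entries of $\iota P^{-1}$ outside $\bp$ as ratios of maximal minors of $\iota$ -- together with total unimodularity shows that each $\mu^{(j)}$ has all entries in $\{-1,0,1\}$, with the distinguished index $i_j\in\bp$ satisfying $\mu^{(j)}_{i_j}=1$ and all other nonzero entries lying outside $\bp$. This yields $k$ usable instances of (\ref{q-diff-z-2}).

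\emph{Unwinding the equations.} Write $I^{\eff}(\bp)=\sum_{d\in\bN^k}c_d\,Q^d$, where $c_d$ depends on $d$ only through $D=\iota d$ by (\ref{I-function-with-special-level}). The operator $\prod_{i\in S^{(j)}_+}z_i^{-1}(1-U_i|_\bp\,q^{-z_i\partial_{z_i}})$ multiplies $z^{\iota d}$ by $\prod_{i\in S^{(j)}_+}(1-U_i|_\bp\,q^{-D_i})$ and shifts the $z$-exponent by $-\sum_{i\in S^{(j)}_+}e_i$, and since $\sum_{i\in S^{(j)}_+}e_i-\sum_{i\in S^{(j)}_-}e_i=\mu^{(j)}=\iota e_j$, equation (\ref{q-diff-z-2}) for $\mu^{(j)}$ translates into the recursion
\[
\Big(\prod_{i\in S^{(j)}_+}\bigl(1 - U_i|_\bp\, q^{-(\iota(d+e_j))_i}\bigr)\Big)\, c_{d+e_j} \;=\; \Big(\prod_{i\in S^{(j)}_-}\bigl(1 - U_i|_\bp\, q^{-(\iota d)_i}\bigr)\Big)\, c_d ,
\]
which, after clearing Laurent monomials in $q$ by invertible powers of the operators $q^{Q_l\partial_{Q_l}}$, is exactly of the shape in Lemma \ref{FG-uniqueness} with $x_j=Q_j$. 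The non-degeneracy hypothesis holds: the factor indexed by $i_j\in\bp$ is $1-q^{-(\iota(d+e_j))_{i_j}}=1-q^{-(d_j+1)}$, nonzero since $d_j+1\geq1$; and for $i\notin\bp$ the factor $1-U_i|_\bp q^{m}$ is a nonzero element of $K_{\sT\times\bC^*_q}(\bp)_\loc$ because $U_i|_\bp$ is a nontrivial monomial in the equivariant parameters. Hence Lemma \ref{FG-uniqueness} applies and $I^{\eff}(\bp)$ is uniquely determined by its constant term $c_0=I^{\eff}(\bp)|_{Q=0}$, read off from (\ref{I-function-with-special-level}); this gives 1). For 2), write $\widetilde I^{\eff}(\bp)=\Phi_\bp\cdot J_\bp$ with $\Phi_\bp:=e^{-\sum_{i\notin\bp}\frac{\ln z_i\ln U_i|_\bp}{\ln q}}\prod_{i\notin\bp}(U_i|_\bp)_\infty^{-1}$ the prescribed prefactor and $J_\bp:=\sum_{d\in\bZ^k}z^D/\prod_i(q^{-1}U_i|_\bp;q^{-1})_{D_i}\in 1+Q\cdot K_{\sT\times\bC^*_q}(\bp)_\loc\llbracket Q^{\Eff(\bp)}\rrbracket$ (the terms $d\notin\Eff(\bp)$ vanishing by Remark \ref{sum-over-lattice}). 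Since $U_i|_\bp$ depends only on equivariant parameters, $\Phi_\bp^{-1}\circ\bigl(z_i^{-1}(1-q^{-z_i\partial_{z_i}})\bigr)\circ\Phi_\bp=z_i^{-1}(1-U_i|_\bp\,q^{-z_i\partial_{z_i}})$ for every $i$ (the $z$-independent infinite product commuting through, and $U_i|_\bp=1$ for $i\in\bp$); substituting into (\ref{eqn-for-kalher}), which was established for each fixed-point contribution in the proof of Theorem \ref{main-theorem} 1), gives precisely equation (\ref{q-diff-z-2}) for $J_\bp$. By the argument of 1), $J_\bp$ is the unique such solution with $J_\bp|_{Q=0}=1$, so $\widetilde I^{\eff}(\bp)$ is the unique solution of (\ref{eqn-for-kalher}) with the stated asymptotics; summing over $\bp$ concludes.

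\textbf{Expected main obstacle.} The two delicate points are: (i) producing a $\{-1,0,1\}$-valued basis of $\ker\beta$ adapted to $\bp$, which is exactly where total unimodularity of $\iota$ is indispensable and where the Cramer's-rule argument must be spelled out carefully; and (ii) verifying the non-degeneracy hypothesis of Lemma \ref{FG-uniqueness}, namely that no leading coefficient $F_j(q^{n_1},\dots,q^{n_k})$ vanishes -- this rests on the $i_j$-factor vanishing only at $d_j=-1$, outside the relevant range, together with the invertibility of $1-U_i|_\bp q^m$ in the localized equivariant $K$-theory. Keeping $z$-shifts and $Q$-shifts straight while translating (\ref{q-diff-z-2}) into recursion form is the principal bookkeeping burden, but no idea beyond Lemma \ref{FG-uniqueness} appears to be needed.
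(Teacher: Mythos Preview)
Your proposal is correct and follows essentially the same approach as the paper: normalize $\iota$ so the rows in $\bp$ form the identity (using total unimodularity), take the columns of $\iota$ as the circuits feeding into (\ref{q-diff-z-2}), rewrite in the effective parameters $Q_j$, and verify the non-degeneracy hypothesis of Lemma \ref{FG-uniqueness} via $U_j|_\bp=1$ for $j\in\bp$ and $U_i|_\bp\neq 1$ for $i\notin\bp$. The paper simply asserts that 2) follows from 1), whereas you spell out the conjugation $\Phi_\bp^{-1}\circ z_i^{-1}(1-q^{-z_i\partial_{z_i}})\circ\Phi_\bp = z_i^{-1}(1-U_i|_\bp\,q^{-z_i\partial_{z_i}})$ explicitly; otherwise the arguments coincide. (One small remark: the lemma is stated for a single $\bp$, so the final ``summing over $\bp$'' is superfluous.)
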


\begin{proof}
It suffices to prove 1), which 2) directly follows from. Let $\bp\in \fX$ be a fixed point. Up to a change of basis on $\bZ^k$, we can assume that the matrix $\iota$ is of the form $\begin{pmatrix}
I \\
C
\end{pmatrix}$, where $I$ is the $k\times k$ submatrix with rows in $\bp$. To avoid complicated notations, we assume that $\bp = \{1, \cdots, k\}$, without the loss of too much generality. We have $Q_j = z_j \prod_{i=k+1}^n z_i^{C_{ij}}$, $1\leq j\leq k$. Each column of $\iota$ then gives a circuit, and the $q$-difference equations (\ref{q-diff-z-2}), written in terms of $Q_j$'s, are
\ben
&& \left[ (1 - q^{- Q_j \partial_{Q_j}} )  \prod_{i\not\in \bp, \, C_{ij} = 1} (1- U_i |_\bp \cdot q^{- \sum_{l\in \bp} C_{il} Q_l \partial_{Q_l}} ) \right. \\
&& \left. - Q_j \prod_{i\not\in \bp, \, C_{ij} = -1} (1- U_i |_\bp \cdot q^{- \sum_{l\in \bp} C_{il} Q_l \partial_{Q_l}}  )  \right]  I^{\eff} (\bp) = 0
\een
for any $1\leq j\leq k$. We used the fact $U_j |_\bp = 1$ since $j\in \bp$; and note that $U_i |_\bp \neq 1$, for all $i\not\in \bp$. The system of $q$-difference equations then satisfies the assumption in Lemma \ref{FG-uniqueness}, and hence the lemma follows. 
\end{proof}

Lemma \ref{asymptotic} proves the uniqueness part in 1) of Theorem \ref{main-theorem}. Now let's prove the second part of Theorem \ref{main-theorem}.

\begin{proof}[Proof of Theorem \ref{main-theorem} 2)]
From (\ref{eqn-for-equiv}), we know that $ \widetilde I (\bp^!) $ satisfies
\begin{align*}
\left[ \prod_{i\in R_+} ((a_i^!)^{-1} (1- q^{ a^!_i \partial_{a^!_i}} ) ) -  \prod_{i\in R_-} ( (a^!_i)^{-1} (1- q^{ a^!_i \partial_{a^!_i}} ) ) \right] \left( e^{\sum_{i=1}^n \frac{\ln z^!_i \ln a^!_i}{\ln q} } \cdot \widetilde I^{\eff} (\bp^!) \right) = 0 	. 
\end{align*}
Applying the mirror map $\tau$ to both sides, we can see that the two functions
$$
\widetilde I(\bp), \qquad  e^{-\sum_{i=1}^n \frac{\ln z_i \ln a_i}{\ln q} } \cdot \tau ( \widetilde I^{\eff} (\bp^!) )
$$
satisfy the same $q$-difference equations. We regard the two functions
as formal power series in K\"ahler parameters, with appropriate exponential prefactors. Therefore, by the uniqueness result, it suffices to check that $e^{\sum_{i=1}^n \frac{\ln z_i \ln a_i}{\ln q} } \cdot \tau ( \widetilde I^{\eff} (\bp^!) )$ admits the same asymptotic form as in 2) of Lemma \ref{asymptotic}. 

Recall that
$$
\widetilde I^{\eff} (\bp^!) = e^{-\sum_{i\not\in \bp^!} \frac{\ln z_i^! \ln U_i |_{\bp^!} }{\ln q} } \cdot  \prod_{i\not\in \bp^!} \frac{1}{ (U_i |_{\bp^!} )_\infty } \cdot \sum_{d^! \in \Eff(\bp^!)} \frac{(z^!)^{D^!}}{\prod_{i\in \bp^!} (q^{-1}; q^{-1})_{D_i^!} \prod_{i\not\in \bp^!} (q^{-1} U_i |_{\bp^!}; q^{-1} )_{D_i^!} } . 
$$
Its asymptotes as $U_i |_{\bp^!} \to 0$ is
$$
e^{-\sum_{i\not\in \bp^!} \frac{\ln z_i^! \ln U_i |_{\bp^!} }{\ln q} } \cdot \sum_{D_i^! \geq 0, \, i\in \bp^!} \frac{(z^!)^{D^!}}{\prod_{i\in \bp^!} (q^{-1}; q^{-1})_{D_i^!}  } , 
$$
where the summation $D_i^!\geq 0$ for $i\in \bp^!$ is by the combinatorial description of $\Eff(\bp^!)$ in Lemma \ref{key-lemma}. 

Without loss of generality, we assume that the matrix $\iota$ is of the form $\begin{pmatrix}
I \\
C
\end{pmatrix}$, and the matrix $\beta$ is of the form $\begin{pmatrix}
-C^T & I 
\end{pmatrix}$. Denoted by $C_{ij}$ the entries of submatrix $C$ in $iota$ and denoted by $C_{ij}^!$ the entries of submatrix $-C^T$ in $\beta$, then we have $C_{ji}^! = - C_{ij}$, for $j\in \bp$, $i\not\in \bp$. So
\begin{align*}
U_j |_{\bp^!} = a_j^! \prod_{i\in \bp^!} (a_i^!)^{C_{ij}}  \quad {\rm{and}}  \quad (z^!)^{D^!} = \prod_{j\in \bp^!} (z_j^!)^{D_j^!} \prod_{j \not\in \bp^!} (z_j^!)^{\sum_{i\in \bp^!} C_{ji}^! D_i^!}
\end{align*}
Under the mirror map, we have
\ben
\tau \Big( \sum_{j\not\in \bp^!} \ln z_j^! \ln U_j |_{\bp^!} \Big) &=& \tau \Big( \sum_{j\not\in \bp^!} \ln z_j^! \ln \Big( a_j^! \prod_{i\in \bp^!} (a_i^!)^{C_{ij}} \Big) \Big) \\
&=& \sum_{j\in \bp} \ln a_j \ln \Big( z_j \prod_{i\not\in \bp} (z_i)^{C_{ij}} \Big) \\
&=& \sum_{j=1}^n \ln z_j \ln a_j + \sum_{i \not\in \bp} \ln z_i \ln \Big( a_i^{-1} \prod_{j\in \bp} a_j^{C_{ij}} \Big) \\
&=& \sum_{j=1}^n \ln z_j \ln a_j - \sum_{i \not\in \bp} \ln z_i \ln U_i |_\bp, 
\een
and
\begin{align*}
\tau \left( \sum_{D_i^! \geq 0, \, i\in \bp^!} \frac{(z^!)^{D^!}}{\prod_{i\in \bp^!} (q^{-1}; q^{-1})_{D_i^!}  } 	\right) &= \sum_{D_i \geq 0, i \notin \bp} \frac{\prod_{i \notin \bp}a^{D_i}_i \prod_{j \in \bp} a_j^{\sum_{i \notin \bp}-C_{ij}D_i} }{\prod_{i \notin \bp}(q;q)_{D_i}} \\
&=\sum_{D_i \geq 0, i \notin \bp} \frac{\prod_{i \notin \bp} \left( a_i \prod_{j \in \bp} a_j^{-C_{ij}} \right)^{D_i} }{\prod_{i \notin \bp}(q;q)_{D_i}} \\
&=\prod_{i \notin \bp}\frac{1}{(U_i|_{\bp})_\infty}
\end{align*}
where we use the $q$-binomial formula (\ref{q-binomial-formula}).

Hence as $\tau(U_i|_{\bp^!}) \to 0$, 
\ben
\tau (\widetilde I^{\eff} (\bp^!) ) &\sim&  e^{\sum_{i=1}^n \frac{\ln z_i \ln a_i}{\ln q} } \cdot e^{-\sum_{i\not\in \bp} \frac{\ln z_i \ln u_i |_\bp }{\ln q} } \cdot\prod_{i\not\in \bp} \frac{1}{(U_i |_\bp )_\infty } . 
\een
We see that $e^{-\sum_{i=1}^n \frac{\ln z_i \ln a_i}{\ln q} } \cdot \tau ( \widetilde I^{\eff} (\bp^!) )$ admits the same prefactor as in Lemma \ref{asymptotic} when $Q\to 0$. The theorem is then proved. 
\end{proof}

\begin{Remark}
Here we only treat the modified $I$-functions as \emph{formal} power series in K\"ahler or equivariant parameters. In general they do not converge as (multi-valued) analytic functions. For K\"ahler parameters this is due to the possible divergence of the $I$-function $I^{\eff}(\bp)$ itself. For equivariant parameters, this can be seen already from the formula of $\widetilde I^{\eff}(\bp)$: in terms of $U_i |_\bp$, it admits an infinite family of poles $U_i |_\bp = q^{\bZ}$.  
\end{Remark}

\bibliographystyle{abbrv}
\bibliography{3d_N_2_mirror}
\end{document}